\newcommand{\de}{\: \mathrm{d}}
\newcommand{\ddt}[2]{\frac{\mathrm{d}^2#1}{\mathrm{d}#2^2}}
\newcommand{\ddp}[2]{\frac{\partial#1}{\partial#2}}
\newcommand{\R}{\mathbb{R}}
\newcommand{\Z}{\mathbb{Z}}
\newcommand{\C}{\mathbb{C}}
\renewcommand{\i}{\mathrm{i}}
\newcommand{\T}{\mathcal{T}}
\crefname{hypothesis}{Hypothesis}{Hypotheses}
\title{Asymptotic characterisation of localised defect modes: Su-Schrieffer-Heeger and related models\thanks{Submitted to the editors 21 Jul 2022.
\funding{This work was funded by the H2020 FETOpen project BOHEME under grant agreement No.~863179.}}}
\author{Richard V. Craster\thanks{Department of Mathematics, Imperial College London, South Kensington Campus, London SW7~2AZ
  (\email{bryn.davies@imperial.ac.uk}).}
\and Bryn Davies\footnotemark[2]}
\begin{document}

\maketitle

\begin{abstract}
Motivated by topologically protected states in wave physics, we study localised eigenmodes in one-dimensional periodic media with defects. The Su-Schrieffer-Heeger model (the canonical example of a one-dimensional system with topologically protected localised defect states) is used to demonstrate the method. Our approach can be used to describe two broad classes of perturbations to periodic differential problems: those caused by inserting a finite-sized piece of arbitrary material and those caused by creating an interface between two different periodic media. The results presented here characterise the existence of localised eigenmodes in each case and, when they exist, determine their eigenfrequencies and provide concise analytic results that quantify the  decay rate of these modes. These results are obtained using both high-frequency homogenisation and transfer matrix analysis, with good agreement between the two methods. 
\end{abstract}

\begin{keywords}
  stop bands, photonic crystals, transfer matrices, high-frequency homogenisation, Floquet–Bloch waves, topological waveguides
\end{keywords}

\begin{AMS}
  35J05, 35C20, 74J20, 35B27, 78A50
\end{AMS}

\section{Introduction}
A powerful tool for manipulating waves in periodic media is the principle that if a defect is introduced to an otherwise crystalline structure, then it is possible to create eigenmodes that are strongly localised in a neighbourhood of the defect \cite{bayindir2000tight, figotin1997localized, meade1991photonic}. These localised eigenmodes (often known as \emph{defect modes}) have eigenfrequencies that are within a stop band of the spectrum of the unperturbed periodic structure and typically decay exponentially fast away from the defect.  This phenomenon is similar to the localised states that exist in the neighbourhood of an impurity in a semiconductor \cite{yablonovitch1991donor} and can be used to build powerful waveguides, capable of guiding waves of specific frequencies both along straight lines and around corners \cite{khanikaev2013photonic, makwana2018geometrically, rechtsman2013photonic}.

A significant breakthrough in this field has been the development of \emph{topologically protected} localised eigenmodes \cite{khanikaev2013photonic, rechtsman2013photonic}. Many traditional defect modes, often created by local compact perturbations of a periodic medium, suffer from being very sensitive to the material's properties and their strong localisation quickly breaks down when random imperfections are introduced (see \emph{e.g.} Figure~9 of \cite{ammari2020topologically}). Topologically protected modes, however, exploit the topological properties of the underlying periodic structure to create localised defect modes that are robust with respect to random imperfections \cite{asboth2016short,  khanikaev2013photonic}. This significant development has been inspired by the creation of topological insulators \cite{asboth2016short, kane2005topological} and has greatly increased waveguides' efficacy by giving them enhanced robustness. 

Media supporting localised defect modes have previously been studied using various asymptotic methods. Defects where the periodic medium is only changed locally can be studied concisely using approaches based on the method of fictitious sources \cite{ammari2021functional, thompson2008interaction, wilcox2005modeling}. These studies have been based, for example, on multiple scattering formulations \cite{thompson2008interaction} and asymptotic expansions in terms of material parameters \cite{ammari2021functional}. There is also a growing body of theory providing rigorous foundations for the existence of topologically protected defect modes in terms of multi-scale expansions \cite{drouot2020defect, fefferman2017topologically, lin2021mathematical}.

In this work, we study one-dimensional models and develop an approach for describing localised eigenmodes induced by a broad class of perturbations. In particular, two types of defects are considered in this work: the first are perturbations caused by inserting a finite piece of one material into another periodic material, while the second is those caused by sticking together two different periodic materials. In the latter case, it is required that the two materials have the same spectral band gaps, so that the localised mode can have an eigenfrequency lying in stop bands of both media. This is a setting that often occurs in the study of topological insulators, particularly when the interface is created by making different perturbations to a periodic medium on either side of the defect \cite{ammari2020topologically, makwana2018geometrically}. 

The examples we will use to demonstrate our method will be versions of the Su-Schrieffer-Heeger model \cite{su1979solitons, hasan2010colloquium}. Arising in the study of electron localisation in polymers, the Su-Schrieffer-Heeger model (often abbreviated to `SSH') is commonly cited as the simplest periodic structure with non-trivial topology and has led to many subsequent studies in settings including acoustics \cite{zheng2019observation}, photonics \cite{xie2018second}, elastic plates \cite{chaplain2020topological} and mechanical systems \cite{huber2016topological}. Much of this work has studied Hamiltonians under tight-binding approximations, but there have also been attempts to reconcile this theory in analogous differential systems. A spectral analysis of a three-dimensional Helmholtz formulation of this problem was conducted in \cite{ammari2020robust, ammari2020topologically} using boundary integral analysis, including showing the robustness of the eigenfrequency of the localised eigenmode. Here, we study one-dimensional versions of these problems and are able to extend the theory in this case, particularly by quantifying the rate at which the localised eigenmode decays away from the defect.

In this work, we use both transfer matrices and high-frequency homogenisation to characterise the modes. Transfer matrices are a popular and long-standing approach for modelling the propagation of waves in periodic media \cite{markovs2008wave, SANCHEZSOTO2012191}. The multi-scale homogenisation method used here operates at ``high-frequency'' in the sense that we describe localised eigenmodes as perturbations of standing wave solutions with non-zero frequency. This is in contrast to the low-frequency asymptotics that is classically employed in homogenisation theory. This method was introduced in \cite{craster2010high} and developed to discrete systems in \cite{craster2010lattice}. It is closely related to other prominent two-scale homogenisation methods \cite{allaire1992homogenization, nguetseng1989general} and $k\cdot p$ theory \cite{willatzen2009kp}. The advantage of the homogenised approach for our problems is that, while transfer matrices are a fundamentally one-dimensional approach, high-frequency homogenisation can be easily generalised to multi-dimensional structures \cite{craster2010high}. Our new approach extends previous work on high-frequency homogenisation of discrete systems with defects \cite{makwana2013localised} to continuous differential problems. It also extends previous homogenisation results for homogeneous background media \cite{marigo2017effective} and generalises results for local perturbations to periodic potentials \cite{hoefer2011defect}. The results presented here build on work characterising the existence of topologically protected edge modes \cite{drouot2020defect, fefferman2017topologically, lin2021mathematical} by providing a means to quantify their most important properties.

The paper is split into two parts, corresponding to the two classes of perturbations we consider: perturbations caused by inserting a finite section of material are considered in \Cref{sec:bounded}, while the juxtaposition of two different media is considered in \Cref{sec:half}. In each case, we start by presenting the general theory, first characterising the localised eigenmodes in terms of transfer matrices before computing approximations using high-frequency homogenisation. We then demonstrate the theory on some tangible examples, including several examples based on the Su-Schrieffer-Heeger model. Finally, in \Cref{sec:rainbow}, we demonstrate the value of this theory by using it to design a custom rainbow filtering device.

\section{Bounded defects in periodic media} \label{sec:bounded}

We will begin by studying the problem of creating a defect in a periodic medium by inserting a finite-sized piece of another material. A perturbation of this kind is typically compact, meaning it is only a local perturbation and does not affect the underlying spectral band structure. Two examples are sketched in \Cref{fig:mesh}. The theory developed here does not need the length of the defect region to be equal to the periodicity length (or an integer multiple of it).

\subsection{Problem formulation}

\begin{figure}
\centering
\includegraphics[width=0.8\linewidth]{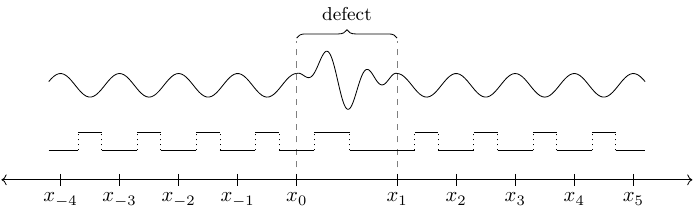}
\caption{We study transmission in a medium that is symmetric and periodic outside of some finite defect region. Two examples of suitable media are depicted above the coordinate mesh.} \label{fig:mesh}
\end{figure}

Let $h>0$ and define a mesh of points $\{x_n:n\in\Z\}$ which is such that
\begin{equation}
x_{n+1}-x_{n}=h \quad \text{for all } n\in\Z\setminus\{0\}.
\end{equation}
The distance $x_1-x_0$ need not be equal to $h$. Suppose that $c:\R\to\R$ is a piecewise smooth function which is strictly positive and is periodic on $\R\setminus(x_0,x_1)$, in the sense that
\begin{equation} \label{periodicity}
c(x_n+t)=c(x_m+t) \quad \text{for any } t\in[0,h) \text{ and any } n,m\in\Z\setminus\{0\}.
\end{equation}
Additionally, in order to simplify the analysis, we will assume that $c$ is symmetric in the regions where it is periodic, in the sense that
\begin{equation} \label{symmetry}
c(x_n+t)=c(x_{n+1}-t) \quad \text{for any } t\in[0,h) \text{ and any }n\in\Z\setminus\{0\}.
\end{equation}
This symmetry assumption is not essential, and our analysis can be generalised easily, but it will make the formulas much more concise and illuminating, while still allowing us to explore the interesting topological examples, based on the Su-Schrieffer-Heeger model. Within the defect region $(x_0,x_1)$, $c$ can be any strictly positive and piecewise smooth function. An example of the mesh  $\{x_n:n\in\Z\}$ is shown in \Cref{fig:mesh}, along with two suitable suitable functions.

We are interested in finding solutions $(u,\omega)\in C^2(\R^2)\times \C$ such that $u\not\equiv0$ and
\begin{equation} \label{eq:helmholtz}
\ddt{u}{x}+ \frac{\omega^2}{c^2(x)}u=0.
\end{equation}
We are interested in two types of solutions: propagating modes, which are elements of the space $L^2_\alpha(\mathbb{R}):=\{u\in L^2_{loc}(\mathbb{R}): u(x+h)=e^{\i \alpha h}u(x)\}$, and localised modes, which decay and are elements of $L^2(\mathbb{R})$. We will refer to the set of $\omega$ such that $(u,\omega)$ is a solution to \eqref{eq:helmholtz} as the spectrum associated to the geometry described by $c(x)$ and will denote it by $\sigma(c)$.

\begin{lemma}
The spectrum $\sigma(c)$ associated to any strictly positive $c(x)$ contains only real values.
\end{lemma}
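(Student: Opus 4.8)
The plan is to use the fact that \eqref{eq:helmholtz} is the eigenvalue equation of a self-adjoint, non-negative Sturm--Liouville-type operator --- informally $-c^2(x)\,\partial_x^2$ acting on $L^2(\R,\,c^{-2}\de x)$, which is symmetric and non-negative precisely because $c$ is real-valued --- so that $\omega^2$, and hence $\omega$, must be real and non-negative. I would make this concrete through a Wronskian identity, which deals with localised (square-integrable) modes and with the remaining bounded, Bloch-type solutions in one stroke. Implicit throughout is that $\sigma(c)$ consists of those $\omega$ admitting a nonzero \emph{bounded} solution of \eqref{eq:helmholtz}: the bare differential equation has nontrivial solutions for every $\omega\in\C$, so some such restriction is needed for the statement to have content, and I would also use the mild standing hypothesis that $c$ is bounded above and below away from zero.

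So, given a solution $(u,\omega)$ of \eqref{eq:helmholtz} with $u\not\equiv0$ and $u$ bounded, I would set $W=u'\overline{u}-u\overline{u'}$, which is purely imaginary, and differentiate: substituting $u''=-\omega^2 c^{-2}u$ and $\overline{u''}=-\overline{\omega^2}c^{-2}\overline{u}$ --- this last step being where $c$ real-valued enters --- gives
\begin{equation*}
W'(x)\;=\;\frac{\overline{\omega^2}-\omega^2}{c^2(x)}\,|u(x)|^2\;=\;-\,2\i\,\frac{\operatorname{Im}(\omega^2)}{c^2(x)}\,|u(x)|^2 .
\end{equation*}
If $\operatorname{Im}(\omega^2)\neq0$ then $W$ is strictly monotone; since $u$ bounded forces $u'$ bounded (via the equation, as $c^{-2}$ is bounded), $W$ is bounded, so its total variation $|\operatorname{Im}(\omega^2)|\int_{\R}|u|^2c^{-2}\de x$ is finite, i.e.\ $u\in L^2(\R)$. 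But then $u,u',u''\in L^2(\R)$ and $u$ vanishes at $\pm\infty$, so multiplying \eqref{eq:helmholtz} by $\overline{u}$ and integrating by parts yields $\int_{\R}|u'|^2\de x=\omega^2\int_{\R}|u|^2c^{-2}\de x$, which forces $\omega^2\ge0$ --- contradicting $\operatorname{Im}(\omega^2)\neq0$. Hence $\omega^2\in\R$. Finally, were $\omega^2<0$ we would have $(|u|^2)''=2|u'|^2-2\omega^2c^{-2}|u|^2\ge0$, so the bounded function $|u|^2$ would be convex on $\R$, hence constant, forcing $u'\equiv0$ and $u\equiv0$. Therefore $\omega^2\ge0$ and $\omega\in\R$.

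The step I expect to do the real work is obtaining $L^2$-integrability of $u$ from the Wronskian identity when $\operatorname{Im}(\omega^2)\neq0$: this is what lets the energy identity $\int|u'|^2=\omega^2\int|u|^2c^{-2}$ be applied even though $\sigma(c)$ contains non-$L^2$ (Bloch) modes, for which a naive integration by parts over $\R$ would be invalid. The rest is bookkeeping --- justifying the integrations by parts, handling the non-$L^2$ case, and fixing the admissible class of solutions as above. As an alternative to the Wronskian estimate, one could instead invoke the self-adjointness of the periodic background operator together with classical Floquet--Bloch band theory to place its spectrum on $[0,\infty)$, and treat the defect modes, which lie in the spectral gaps, by the energy identity.
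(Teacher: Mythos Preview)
The paper states this lemma without proof, so there is nothing to compare against directly. Your argument is sound and, importantly, you correctly identify a genuine gap in the paper's formulation: as written, $\sigma(c)$ is defined as the set of $\omega$ for which \eqref{eq:helmholtz} admits \emph{some} nonzero solution, which would make $\sigma(c)=\C$ since a second-order linear ODE always has a two-dimensional solution space. Your restriction to bounded solutions is the natural fix and is consistent with how the paper subsequently uses the spectrum (Bloch modes in the pass bands, exponentially decaying modes at defect frequencies).

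The Wronskian route you take is clean. A couple of minor remarks: the step ``$u$ bounded forces $u'$ bounded via the equation'' deserves one more word --- the equation gives $u''$ bounded, and then one invokes the Landau--Kolmogorov inequality $\|u'\|_\infty^2\le 2\|u\|_\infty\|u''\|_\infty$ to close the loop; this is standard but not entirely immediate. Similarly, passing from $u\in L^2$ and $u''\in L^2$ to $u'\in L^2$ and $u(\pm\infty)=0$ is routine Sobolev interpolation, but worth flagging. Your convexity argument for ruling out $\omega^2<0$ is elegant and avoids any appeal to oscillation theory.

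The alternative you sketch at the end --- self-adjointness of the periodic background plus Floquet--Bloch for the essential spectrum, energy identity for the gap modes --- is closer in spirit to how the paper implicitly treats the spectrum in the subsequent sections, and is perhaps what the authors had in mind by omitting the proof. Your direct Wronskian argument has the advantage of not needing to separate the defect from the background.
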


\subsection{Periodic media}

We begin by exploring the case of a periodic medium with no defect. This corresponds to $x_0=x_1$ in our setting and we will use the notation $c_0$ to denote the fact that there is no defect region in $c$. We define the transfer matrix $T(\omega)\in\R^{2\times2}$, for any $\omega\in\R$, to be such that any solution $u$ to \eqref{eq:helmholtz} satisfies
\begin{equation} \label{eq:transfer_left}
\begin{pmatrix} u(x_{n+1}) \\ u'(x_{n+1}) \end{pmatrix} =
T(\omega) \begin{pmatrix} u(x_n) \\ u'(x_n) \end{pmatrix},
\end{equation}
for $n\in\Z\setminus\{0\}$, $\omega\in\R$. Under our assumption of symmetry \eqref{symmetry}, we also have that
\begin{equation} \label{eq:transfer_right}
\begin{pmatrix} u(x_{n-1}) \\ u'(x_{n-1}) \end{pmatrix} =
ST(\omega)S \begin{pmatrix} u(x_n) \\ u'(x_n) \end{pmatrix}, \qquad\text{where } S=\begin{pmatrix} 1 & 0 \\ 0 & -1 \end{pmatrix},
\end{equation}
for $n\in\Z\setminus\{0\}$, $\omega\in\R$.

\begin{lemma}
For any periodic medium satisfying \eqref{periodicity} and \eqref{symmetry}, the corresponding transfer matrix satisfies $\det(T(\omega))=1$ for all $\omega\in\R$.
\end{lemma}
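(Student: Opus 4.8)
The plan is to identify $\det T(\omega)$ with the ratio of the Wronskians of a single fixed pair of solutions of \eqref{eq:helmholtz} evaluated at consecutive mesh points, and then to exploit the absence of a first-derivative term in \eqref{eq:helmholtz} to show that this Wronskian does not vary. The symmetry hypothesis \eqref{symmetry} will in fact play no role; only the basic structure of \eqref{eq:helmholtz} is needed.

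First I would fix $\omega\in\R$ and choose two linearly independent solutions $u_1,u_2$ of \eqref{eq:helmholtz} --- concretely, the solutions with initial data $(u_1(x_1),u_1'(x_1))=(1,0)$ and $(u_2(x_1),u_2'(x_1))=(0,1)$; these exist, are unique, and belong to $C^1(\R)$ once \eqref{eq:helmholtz} is rewritten as the first-order linear system for $(u,u')^\top$ with the piecewise-smooth (and positive) coefficient $\omega^2/c^2$. Collecting them into the fundamental matrix
\begin{equation*}
M(x)=\begin{pmatrix} u_1(x) & u_2(x) \\ u_1'(x) & u_2'(x)\end{pmatrix},
\end{equation*}
and applying the defining relation \eqref{eq:transfer_left} to $u_1$ and $u_2$ in turn, one gets $M(x_{n+1})=T(\omega)M(x_n)$ for every $n\in\Z\setminus\{0\}$, and hence $\det M(x_{n+1})=\det T(\omega)\,\det M(x_n)$.

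The second step is to note that $\det M(x)$ is exactly the Wronskian $W(x)=u_1(x)u_2'(x)-u_2(x)u_1'(x)$, and that $W$ is constant in $x$. Differentiating $W$ and substituting $u_i''=-(\omega^2/c^2)u_i$ from \eqref{eq:helmholtz}, the two resulting terms cancel, so $W'=0$ wherever $c$ is smooth; since $u_1,u_2\in C^1(\R)$, the function $W$ is continuous, hence globally constant (equivalently, this is Abel's/Liouville's formula for a first-order system whose coefficient matrix has zero trace). Linear independence of $u_1,u_2$ forces this constant, call it $W_0$, to be nonzero.

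Finally, evaluating $\det M(x_{n+1})=\det T(\omega)\,\det M(x_n)$ at any single $n\neq 0$ gives $W_0=\det T(\omega)\,W_0$, and dividing through by $W_0$ yields $\det T(\omega)=1$. I do not expect a genuine obstacle here; the only point requiring a little care is the behaviour of solutions at the points where $c$ is discontinuous, but the continuity of $u$ and $u'$ there --- which is precisely the matching condition built into the first-order formulation --- is what keeps $W$ continuous, and so the argument goes through unchanged.
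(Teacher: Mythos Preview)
Your argument is correct and is the classical Wronskian/Abel's-identity proof: identify the transfer matrix with the fundamental matrix across one cell, note that the first-order system associated to \eqref{eq:helmholtz} has trace-zero coefficient matrix, and conclude that the Wronskian is constant, hence $\det T=1$.

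The paper takes a different route that actually uses the symmetry hypothesis \eqref{symmetry}. Combining the forward relation \eqref{eq:transfer_left} with the reflected relation \eqref{eq:transfer_right} (the latter being where symmetry enters) yields $T(\omega)ST(\omega)S=I$, and the determinant statement is read off from that. Your approach is more general --- it needs neither periodicity nor symmetry, only the absence of a first-derivative term in \eqref{eq:helmholtz} --- and it closes cleanly. The paper's approach, by contrast, has the side benefit of producing the identity $T^{-1}=STS$ that is reused later in the defect analysis; on the other hand, taking determinants of $TSTS=I$ literally gives only $\det(T)^2=1$, so one still needs a further remark (e.g.\ continuity in $\omega$ together with $T(0)=\begin{psmallmatrix}1&h\\0&1\end{psmallmatrix}$, or that $T_{12}\neq0$ forces the sign) to exclude $\det T=-1$, a step your Wronskian argument avoids entirely.
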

\begin{proof}
From the composition of \eqref{eq:transfer_left} and \eqref{eq:transfer_right} we have that $T(\omega)ST(\omega)S=I$, from which the result follows.
\end{proof}

For periodic media $c_0$, the spectrum $\sigma(c_0)$ can be found using Floquet--Bloch theory. That is, any element of $\sigma(c_0)$ will also be in the spectrum of \eqref{eq:helmholtz} posed on the elementary cell $[0,h)$ with quasiperiodic boundary conditions: $u(0)=\exp(\i h \alpha)u(h)$ and $u'(0)=\exp(\i h \alpha)u'(h)$, where $\alpha\in\R$ is the Bloch wavenumber. This leads to the problem
\begin{equation}
\Big(T(\omega_\alpha)-e^{\i h \alpha}I\Big)\begin{pmatrix}u(x_n)\\u'(x_n)\end{pmatrix}=0.
\end{equation}
From which, we can see that $\omega_\alpha$ and $\alpha$ are related by the dispersion relation given by $\det(T(\omega_\alpha)-e^{\i h \alpha}I)=0$. The spectrum $\sigma(c_0)$ is then given by the union of all such $\omega_\alpha$, for $\alpha\in[0,2\pi/h)$. This theory will be useful for finding the spectrum of the background periodic medium. We are interested in studying localised eigenmodes whose eigenfrequencies satisfy $\omega\in\R\setminus\sigma(c_0)$. With this in mind, the decay of localised eigenmodes can be foreseen by the following result about $\omega\in\R\setminus\sigma(c_0)$:

\begin{lemma}
If $\omega\in\R\setminus\sigma(c_0)$, then $T(\omega)$ has real-valued eigenvalues satisfying $|\lambda_1|<1$ and $|\lambda_2|>1$.
\end{lemma}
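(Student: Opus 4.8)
The plan is to exploit the two structural facts already at our disposal: that $T(\omega)$ is a real $2\times 2$ matrix and that $\det(T(\omega))=1$. Together these pin down the characteristic polynomial to be $\lambda^2-\tau\lambda+1$, where $\tau:=\operatorname{tr}(T(\omega))\in\R$, so the two eigenvalues are $\lambda_{1,2}=\tfrac{1}{2}\bigl(\tau\pm\sqrt{\tau^2-4}\,\bigr)$ and satisfy $\lambda_1\lambda_2=1$. Everything then reduces to a case analysis on the size of $|\tau|$.

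First I would handle the case $|\tau|\le 2$. Here $\tau^2-4\le 0$, so the eigenvalues form a complex-conjugate pair (or a repeated real eigenvalue when $|\tau|=2$), and since their product is $1$ and they are conjugate we get $|\lambda_1|=|\lambda_2|=1$. Writing $\lambda_1=e^{\i\theta}$ for a suitable $\theta\in\R$ and choosing $\alpha=\theta/h$ shows that $\det(T(\omega)-e^{\i h\alpha}I)=0$, i.e.\ $\omega\in\sigma(c_0)$ by the Floquet--Bloch characterisation. The only subtlety is the boundary value $|\tau|=2$, where $T(\omega)$ could be a non-diagonalisable Jordan block; but even then $\pm1$ is a genuine eigenvalue with a non-zero eigenvector, so the Floquet--Bloch condition is still met with $\alpha=0$ or $\alpha=\pi/h$. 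In every subcase we conclude $\omega\in\sigma(c_0)$.

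By contraposition, if $\omega\in\R\setminus\sigma(c_0)$ then $|\tau|>2$. Then $\tau^2-4>0$, so $\lambda_1$ and $\lambda_2$ are real, distinct, and (having product $1$) of the same sign. They cannot both have modulus $1$ since they are distinct, and they cannot both have modulus $>1$ or both modulus $<1$ since $|\lambda_1\lambda_2|=1$; hence, after relabelling, $|\lambda_1|<1<|\lambda_2|$, which is the claim.

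I expect the only real obstacle to be the bookkeeping around the degenerate case $|\tau|=2$: making sure the deduction $\omega\in\sigma(c_0)$ does not secretly require diagonalisability of $T(\omega)$, and spelling out that a one-dimensional eigenspace still supplies the eigenvector needed in the Floquet--Bloch problem. The rest is the elementary observation that a real unimodular $2\times 2$ matrix with $|\operatorname{tr}|>2$ is hyperbolic.
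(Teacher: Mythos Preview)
Your argument is correct and is essentially the same as the paper's: both use $\det(T(\omega))=1$ to force the dichotomy between unit-modulus eigenvalues and a real hyperbolic pair, then eliminate the former via the Floquet--Bloch characterisation of $\sigma(c_0)$. Your version is in fact more careful than the paper's, which tersely states the dichotomy without explicitly treating the degenerate $|\tau|=2$ case you flag.
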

\begin{proof}
Since $T(\omega)$ has $\det(T(\omega))=1$, it must be the case that its eigenvalues are either a complex conjugate pair with $|\lambda_1|=|\lambda_2|=1$ or they are real and satisfy $|\lambda_1|<1$ and $|\lambda_2|>1$. In the former case, there exists some $\alpha\in\R$ such that $\lambda_1=\exp(\i h \alpha)$ so it must hold that $\det(T(\omega)-e^{\i h \alpha}I)=0$, contradicting the fact that $\omega\notin\sigma(c_0)$.
\end{proof}

\subsection{Defect modes}

Consider a medium with some defect in the region $(x_0,x_1)$, where $x_1>x_0$. Define the matrix $D^L\in\R^{2\times2}$ in terms of the solution of two initial value problems on the defect region $[x_0,x_1]$:
\begin{align} \label{eq:defect_matrix}
&\text{Given} \quad \begin{cases}
\left(\ddt{}{x}+ \frac{\lambda^2}{c^2(x)}\right)\psi_{1,\lambda}=0, \\
\psi_{1,\lambda}(x_0) = 1, \\
\psi_{1,\lambda}'(x_0) = 0,
\end{cases}
\,\text{and}\quad
\begin{cases}
\left(\ddt{}{x}+ \frac{\lambda^2}{c^2(x)}\right)\psi_{2,\lambda}=0, \\
\psi_{2,\lambda}(x_0) = 0, \\
\psi_{2,\lambda}'(x_0) = 1,
\end{cases} \nonumber
\\
&\qquad\text{we define}\quad
D^L:=\begin{pmatrix} \psi_{1,\lambda}(x_1) & \psi_{2,\lambda}(x_1) \\ \psi_{1,\lambda}'(x_1) & \psi_{2,\lambda}'(x_1) \end{pmatrix}.
\end{align}
The matrix $D^L$ is the transfer matrix associated to the material in the defect region. We have that
\begin{equation} \label{eq:transfer_defect}
\begin{pmatrix} u(x_{n+1}) \\ u'(x_{n+1}) \end{pmatrix} =
T \begin{pmatrix} u(x_n) \\ u'(x_n) \end{pmatrix} + \delta_{n,0} (D^L-T) \begin{pmatrix} u(x_n) \\ u'(x_n) \end{pmatrix},
\end{equation}
for all $n\in\Z$. It is a standard result in transfer matrix theory to characterise the eigenfrequencies of localised defect eigenmodes using the defect matrix $D^L(\omega)$ and the eigenvectors of $T(\omega)$, see \emph{e.g.} \cite{schoenberg1983properties}. We present a version of this result, for completeness:

\begin{theorem} \label{thm:localised}
The eigenfrequency $\omega$ of a localised eigenmode of the Helmholtz problem \eqref{eq:helmholtz} posed on a medium with a bounded defect must satisfy
\begin{equation*}
\begin{pmatrix} -V_{21}(\omega) & V_{11}(\omega) \end{pmatrix}D^L(\omega)\begin{pmatrix} V_{11}(\omega) \\ -V_{21}(\omega) \end{pmatrix} =0,
\end{equation*}
where $D^L(\omega)$ is the defect matrix, defined in \eqref{eq:defect_matrix}, and $(V_{11}(\omega), V_{21}(\omega))^\top$ is the eigenvector of $T(\omega)$ associated to the eigenvalue $|\lambda_1|<1$. Furthermore, the localised eigenmode satisfies $V_{21}u(x_0)=-V_{11}u'(x_0)$.
\end{theorem}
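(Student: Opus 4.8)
The plan is to recast ``$u$ is localised'' as decay of the sequence of vectors $(u(x_n),u'(x_n))^\top$ as $n\to\pm\infty$, and then to read the claimed identity directly off the transfer relation \eqref{eq:transfer_defect}. First I would recall that the eigenfrequency of any localised eigenmode lies in $\R\setminus\sigma(c_0)$, so the preceding lemma on the eigenvalues of $T(\omega)$ applies: $T(\omega)$ has real eigenvalues $\lambda_1,\lambda_2$ with $|\lambda_1|<1<|\lambda_2|$, it is diagonalisable (the eigenvalues are distinct), and we may choose real eigenvectors $V^{(1)}=(V_{11},V_{21})^\top$ and $V^{(2)}$ for $\lambda_1$ and $\lambda_2$ respectively. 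Iterating \eqref{eq:transfer_defect} along the right half-line gives $(u(x_n),u'(x_n))^\top=T(\omega)^{\,n-1}(u(x_1),u'(x_1))^\top$ for $n\ge1$, and a standard argument shows $\{T(\omega)^{m}v\}_{m\ge0}$ stays bounded if and only if $v\in\operatorname{span}V^{(1)}$ (any nonzero $V^{(2)}$-component grows like $|\lambda_2|^{m}$). Since a localised $u$ has $(u(x_n),u'(x_n))^\top\to0$, this forces $(u(x_1),u'(x_1))^\top$ to be a scalar multiple of $(V_{11},V_{21})^\top$.

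Next I would treat the left half-line. Iterating \eqref{eq:transfer_defect} for $n\le-1$ gives $(u(x_{-m}),u'(x_{-m}))^\top=T(\omega)^{-m}(u(x_0),u'(x_0))^\top$ for $m\ge0$, so by the same reasoning localisation forces $(u(x_0),u'(x_0))^\top$ into the contracting eigendirection of $T(\omega)^{-1}$, namely $\operatorname{span}V^{(2)}$ (the eigenvalue there is $\lambda_2^{-1}=\lambda_1$ since $\det T(\omega)=1$). To bring this into the form stated in the theorem I would use the symmetry assumption: from \eqref{eq:transfer_left} and \eqref{eq:transfer_right} one has $S T(\omega) S=T(\omega)^{-1}$, whence $S V^{(1)}=(V_{11},-V_{21})^\top$ is also an eigenvector of $T(\omega)^{-1}$ for $\lambda_1$; since that eigenspace is one-dimensional, $\operatorname{span}V^{(2)}=\operatorname{span}(V_{11},-V_{21})^\top$. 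Thus $(u(x_0),u'(x_0))^\top=t\,(V_{11},-V_{21})^\top$ for some scalar $t$, which on reading off components gives $u(x_0)=tV_{11}$ and $u'(x_0)=-tV_{21}$, hence $V_{21}u(x_0)=-V_{11}u'(x_0)$ --- the last assertion of the theorem. Moreover $t\neq0$, since $t=0$ would make $(u(x_0),u'(x_0))=(0,0)$ and \eqref{eq:transfer_defect} would then propagate this to every $x_n$, forcing $u\equiv0$ by uniqueness for the Cauchy problem on each interval.

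Finally I would couple the two half-line conditions through the defect. Taking $n=0$ in \eqref{eq:transfer_defect} gives $(u(x_1),u'(x_1))^\top=D^L(\omega)(u(x_0),u'(x_0))^\top=t\,D^L(\omega)(V_{11},-V_{21})^\top$, and by the first paragraph this vector must be parallel to $(V_{11},V_{21})^\top$. Two planar vectors $(a,b)^\top$ and $(c,d)^\top$ are parallel exactly when $\begin{pmatrix}-d & c\end{pmatrix}\begin{pmatrix}a\\ b\end{pmatrix}=0$; applying this with $(c,d)=(V_{11},V_{21})$ and $(a,b)^\top=D^L(\omega)(V_{11},-V_{21})^\top$ and dividing by $t\neq0$ yields
\begin{equation*}
\begin{pmatrix}-V_{21}(\omega) & V_{11}(\omega)\end{pmatrix}D^L(\omega)\begin{pmatrix}V_{11}(\omega)\\ -V_{21}(\omega)\end{pmatrix}=0,
\end{equation*}
as required. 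I expect the only non-mechanical step to be the spectral dichotomy for $T(\omega)$ off the spectrum --- that $\R^2$ splits into a contracting line and an expanding line, with the decaying iterates pinned to the contracting one --- but this is precisely the earlier lemma, so the remainder is routine $2\times2$ linear algebra and bookkeeping with the three transfer relations.
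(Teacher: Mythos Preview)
Your proposal is correct and follows essentially the same approach as the paper: propagate $(u(x_0),u'(x_0))^\top$ with transfer matrices to the left and right, force the growing eigencomponent to vanish on each side, and couple the two resulting eigenvector conditions through $D^L$. The only cosmetic difference is that the paper writes the left-half-line iteration via the symmetry relation $ST^{-n}S$ and extracts components using $V^{-1}$, whereas you iterate with $T^{-1}$ directly and then invoke $STS=T^{-1}$ to identify its contracting eigendirection with $SV^{(1)}$; these are the same computation.
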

\begin{proof}
We start with the values of the solution and its derivative at $x_0$, which we denote by $U_0$ and $U_0'$, and then use \eqref{eq:transfer_defect} to find a solution that decays in the far field in either direction. We use the methods of \cite{schoenberg1983properties} to handle the semi-infinite structures. We have that
\begin{equation} \label{eq:eigenval_right}
\begin{pmatrix} u(x_n) \\ u'(x_n) \end{pmatrix} = T^{n-1}D^L \begin{pmatrix} U_0 \\ U'_0 \end{pmatrix}=V\begin{pmatrix} \lambda_1^{n-1} & 0 \\ 0 & \lambda_2^{n-1}\end{pmatrix}
V^{-1}D^L \begin{pmatrix} U_0 \\ U'_0 \end{pmatrix},
\end{equation}
for $n\in\Z^{>0}$, and similarly
\begin{equation} \label{eq:eigenval_left}
\begin{pmatrix} u(x_{n}) \\ u'(x_{n}) \end{pmatrix} = ST^{-n}S \begin{pmatrix} U_0 \\ U'_0 \end{pmatrix}=SV\begin{pmatrix} \lambda_1^{-n} & 0 \\ 0 & \lambda_2^{-n}\end{pmatrix}V^{-1}S\begin{pmatrix} U_0 \\ U'_0 \end{pmatrix},
\end{equation}
for $n\in\Z^{\leq 0}$, where $|\lambda_1(\omega)|<1$ and $|\lambda_2(\omega)|>1$ are the eigenvalues of $T(\omega)$ and $V(\omega)$ is the matrix of associated eigenvectors. We are looking for localised eigenmodes which satisfy $\lim_{n\to\pm\infty}u(x_n)=0$ and $\lim_{n\to\pm\infty}u'(x_n)=0$. Since $\lim_{n\to\pm\infty}\lambda_1(\omega)^n=0$ but $\lim_{n\to\pm\infty}\lambda_2(\omega)^n\neq0$, we want to find $\omega$, $U_0$ and $U_0'$ such that
\begin{equation}
\begin{pmatrix} -V_{21} & V_{11} \end{pmatrix}D^L\begin{pmatrix} U_0 \\ U'_0 \end{pmatrix}=0
\quad\text{and}\quad
\begin{pmatrix} -V_{21} & V_{11} \end{pmatrix}S\begin{pmatrix} U_0 \\ U'_0 \end{pmatrix}=0.
\end{equation}
The second of these equations gives us that $(U_0, U_0')$ is proportional to $(V_{11}, -V_{21})$, which we can substitute into the first equation to obtain the desired formula.
\end{proof}

As well as identifying the eigenfrequency of localised eignmodes, which is determined by the formula in \Cref{thm:localised}, we want to quantify the rate at which the mode decays away from the defect. Since \Cref{thm:localised} gives the values for $u(x_0)$ and $u'(x_0)$, we can use the transfer matrix $T(\omega)$ to propagate the solution any integer number of steps away from the defect.

\begin{corollary} \label{cor:eigenval_decay}
A localised eigenmode $u$ of \eqref{eq:helmholtz}, posed on a medium with a bounded defect, and its associated eigenfrequency $\omega$ must satisfy
\begin{equation*}
u(x_n)=O\Big(|\lambda_1(\omega)|^{|n|}\Big) \quad \text{and}\quad u'(x_n)=O\Big(|\lambda_1(\omega)|^{|n|}\Big) \quad\text{as }n\to\pm\infty,
\end{equation*}
where $\lambda_1(\omega)$ is the eigenvalue of $T(\omega)$ satisfying $|\lambda_1(\omega)|<1$.
\end{corollary}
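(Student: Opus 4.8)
The plan is to deduce \Cref{cor:eigenval_decay} directly from \Cref{thm:localised} together with the eigenvalue dichotomy for $T(\omega)$, with essentially no new analysis: the point is that the scalar conditions appearing in (and in the proof of) \Cref{thm:localised} are precisely the statements that the relevant state vectors lie in the one-dimensional contracting eigenspace of the transfer matrix, after which the decay is an exact geometric estimate.

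First I would record the structural facts. Any localised eigenmode has eigenfrequency $\omega\in\R\setminus\sigma(c_0)$, so by the third lemma $T(\omega)$ is diagonalisable with real eigenvalues $\lambda_1,\lambda_2$ satisfying $|\lambda_1|<1<|\lambda_2|$, with eigenvector matrix $V(\omega)$ whose first column is $(V_{11},V_{21})^\top$. A short computation shows that $(-V_{21},V_{11})$ is proportional to the second row of $V^{-1}$, i.e. to the left eigenvector of $T(\omega)$ for $\lambda_2$; writing any vector $w$ in the eigenbasis, one sees that $(-V_{21},V_{11})\,w=0$ holds if and only if $w$ has no component along the expanding eigenvector, i.e. $w$ is a scalar multiple of $(V_{11},V_{21})^\top$.

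Next I would treat $n\geq 1$. Set $w:=D^L(\omega)(U_0,U_0')^\top=(u(x_1),u'(x_1))^\top$. The eigenvalue equation in \Cref{thm:localised} is exactly $(-V_{21},V_{11})\,w=0$, so $w$ lies in the $\lambda_1$-eigenspace, and iterating \eqref{eq:transfer_left} gives $(u(x_n),u'(x_n))^\top=T(\omega)^{n-1}w=\lambda_1^{n-1}w$ for $n\geq 1$; since $|\lambda_1|<1$ this yields $u(x_n),u'(x_n)=O(|\lambda_1|^{|n|})$ as $n\to+\infty$. For $n\leq 0$ I would argue symmetrically: the proof of \Cref{thm:localised} also establishes $(-V_{21},V_{11})\,S(U_0,U_0')^\top=0$, so $S(U_0,U_0')^\top$ is again a multiple of $(V_{11},V_{21})^\top$, and \eqref{eq:transfer_right} gives $(u(x_n),u'(x_n))^\top=ST(\omega)^{-n}S(U_0,U_0')^\top=\lambda_1^{-n}(U_0,U_0')^\top$; since $\lambda_1$ is real and $-n=|n|$ in this regime, this is $O(|\lambda_1|^{|n|})$ as $n\to-\infty$.

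There is no genuine obstacle here; the only things that need care are (i) verifying the algebraic claim that $(-V_{21},V_{11})$ is the left eigenvector for $\lambda_2$, so that the scalar equations in \Cref{thm:localised} really do confine the state vectors to the one-dimensional contracting subspace; (ii) keeping the exponents consistent across the two sign regimes and using $\lambda_1\in\R$ to pass from $\lambda_1^{|n|}$ to $|\lambda_1|^{|n|}$; and (iii) noting that the statement concerns only the limits $n\to\pm\infty$, so the (finite, arbitrary) behaviour on the defect region $(x_0,x_1)$ plays no role. In fact the argument gives equality up to a fixed constant vector, so the $O(\cdot)$ in the statement is not an overestimate.
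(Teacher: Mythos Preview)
Your proposal is correct and follows essentially the same route as the paper: both arguments use the scalar conditions from \Cref{thm:localised} to conclude that the state vectors $(u(x_1),u'(x_1))^\top$ and $S(u(x_0),u'(x_0))^\top$ lie in the one-dimensional $\lambda_1$-eigenspace of $T(\omega)$, and then iterate the transfer matrix to obtain exact geometric decay. The paper's version is terser and simply asserts the proportionalities, whereas you make explicit the reason they hold (namely that $(-V_{21},V_{11})$ is the second row of $V^{-1}$, i.e.\ a left eigenvector for $\lambda_2$); this extra clarity is welcome but does not constitute a different method.
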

\begin{proof}
From \Cref{thm:localised} we can see that $(u(x_0), u'(x_0))^\top$ is proportional to $(V_{11}, -V_{21})^\top$. Using the fact that $D^L(V_{11},-V_{21})^\top$ is proportional to $(V_{11},V_{21})^\top$, the equations \eqref{eq:eigenval_left} and \eqref{eq:eigenval_right} can be rewritten as
\begin{equation}
\begin{pmatrix} u(x_n) \\ u'(x_n) \end{pmatrix} = \lambda_1(\omega)^{n-1}\begin{pmatrix} V_{11} \\ V_{21} \end{pmatrix}  \qquad \text{for } n\in\Z^{>0},
\end{equation}
and similarly
\begin{equation}
\begin{pmatrix} u(x_{n}) \\ u'(x_{n}) \end{pmatrix}=\lambda_1(\omega)^{-n}S\begin{pmatrix} V_{11} \\ V_{21} \end{pmatrix}  \qquad \text{for } n\in\Z^{\leq 0},
\end{equation}
from which the result follows.
\end{proof}

\subsection{Homogenisation of localised defect modes} \label{sec:HFH}

The rate at which localised eigenmodes decay away from the defect region can be bounded using the eigenvalues of transfer matrices, as shown in \Cref{cor:eigenval_decay}. We will now use high-frequency homogenisation to build on this by computing an asymptotic approximation for the envelope of the localised eigenmode. As well as verifying the sharpness of the bound from \Cref{cor:eigenval_decay}, it will establish an approach that can be developed to multi-dimensional systems. 

Since the defect region is not assumed to be symmetric, we start by formulating a version of \eqref{eq:transfer_defect} from the other direction. We define the matrix $D^R\in\mathbb{R}^{2\times2}$ in terms of the solutions of the two initial value problems on $(x_0,x_1]$:
\begin{align} \label{eq:defect_matrix_right}
&\text{Given}\quad
\begin{cases}
\left(\ddt{}{x}+ \frac{\lambda^2}{c^2(x)}\right)\psi_{1,\lambda}=0, \\
\psi_{1,\lambda}(x_1) = 1, \\
\psi_{1,\lambda}'(x_1) = 0,
\end{cases}
\,\text{and}\quad
\begin{cases}
\left(\ddt{}{x}+ \frac{\lambda^2}{c^2(x)}\right)\psi_{2,\lambda}=0, \\
\psi_{2,\lambda}(x_1) = 0, \\
\psi_{2,\lambda}'(x_1) = 1,
\end{cases}
\nonumber \\
&\qquad\text{we define}\quad
D^R:=\begin{pmatrix} \psi_{1,\lambda}(x_0) & \psi_{2,\lambda}(x_0) \\ \psi_{1,\lambda}'(x_0) & \psi_{2,\lambda}'(x_0) \end{pmatrix}
\end{align}
and then we have that
\begin{equation} \label{eq:transfer_defect_right}
\begin{pmatrix} u(x_{n-1}) \\ u'(x_{n-1}) \end{pmatrix} =
STS \begin{pmatrix} u(x_n) \\ u'(x_n) \end{pmatrix} + \delta_{n,1} (D^R-STS) \begin{pmatrix} u(x_n) \\ u'(x_n) \end{pmatrix},
\end{equation}
for all $n\in\Z$.

Adding \eqref{eq:transfer_defect} and \eqref{eq:transfer_defect_right}, we find that
\begin{equation} \label{eq:discrete}
\begin{split}
\begin{pmatrix} u(x_{n+1}) + u(x_{n-1}) \\ u'(x_{n+1})-u'(x_{n-1}) \end{pmatrix} =
2T \begin{pmatrix} u(x_n) \\ 0 \end{pmatrix} &+ \delta_{n,0} (D^L-T)\begin{pmatrix} u(x_n) \\ u'(x_n) \end{pmatrix}
\\&\quad+\delta_{n,1} (SD^RS-T) \begin{pmatrix} u(x_n) \\ -u'(x_n) \end{pmatrix}.
\end{split}
\end{equation}
The difference equation \eqref{eq:discrete} can be viewed as the discrete version of \eqref{eq:helmholtz} and will be studied using high-frequency homogenisation in a similar way to the approach used in \cite{craster2010lattice, makwana2013localised} for the study of discrete lattices. Within this discrete framework, it does not make sense to think of $u'(x_n)$ as the derivative of $u(x_n)$. Rather, we should think of $u'(x_n)$ as a discrete quantity that is related to $u(x_n)$ by \eqref{eq:discrete}. To avoid confusion with other derivatives, we will relabel $v(x_n)=u'(x_n)$ from here onwards.

We will apply a high-frequency homogenisation ansatz to \eqref{eq:discrete} in the case that the perturbation is small. In particular, we introduce a small parameter $0<\epsilon\ll1$ which captures the magnitude of the perturbation, in the sense that $\| D^L-T\|=O(\epsilon)$. In particular, we suppose that $\epsilon$ is such that we can write
\begin{equation}
\epsilon P^L = D^L-T \quad\text{and}\quad \epsilon P^R=SD^RS-T,
\end{equation}
for some matrices $P^L,P^R\in\R^{2\times2}$, whose entries are $O(1)$ as $\epsilon\to0$. In addition to the discrete variable, we introduce the continuous long-scale variable $\eta=\epsilon n$. This long scale $\eta$ will capture the growth and decay of eigenmodes, while they oscillate on the discrete short scale $n$. This short scale is discrete as the value of $u$ and its derivative at each mesh point fully determines the field within each periodic unit (and the discrete values can be propagated using transfer matrices). We follow \cite{craster2010lattice, makwana2013localised} and seek an asymptotic characterisation of localised eigenmodes as perturbations of standing waves (\emph{i.e.} the solutions at the edges of the band gap). The reference to ``high'' frequencies in the name of this approach connotes the fact that such waves exist at non-zero frequencies (which are ``high'' in comparison to the low-frequency asymptotic analysis typically performed in classical homogenisation theory). With this in mind, we express the short-scale variable relative to a single reference mass and set
\begin{equation} \label{hfh_ansatz1}
\begin{gathered}
u(x_{n-1})=u(\eta-\epsilon,-1)=-u(\eta-\epsilon,0), \qquad
u(x_{n})=u(\eta,0)=u(\eta,0),\\
u(x_{n+1})=u(\eta+\epsilon,1)=-u(\eta+\epsilon,0),
\end{gathered}
\end{equation}
where we have assumed anti-periodicity between adjacent mesh points, since we are searching for localised eigenmodes near to the edges of the bands in the Bloch dispersion diagram, where $\alpha=\pi$ in the examples considered here (this can be easily adapted for the periodic case). The choice to impose the anti-periodicity for $\eta\mapsto\eta+\epsilon$ in \eqref{hfh_ansatz1} means that $h=x_{n+1}-x_n=\epsilon$. This choice is somewhat arbitrary and could be modified to suit different relations between the perturbation size and the periodicity length (for example, \cite{craster2010lattice} treats a perturbation that is proportional to the square of the periodicity length). We adopt a similar ansatz for the derivative $v$. Inserting this multi-scale approach into the difference equation \eqref{eq:discrete} gives
\begin{equation} \label{eq:discrete_multiscale}
\begin{pmatrix} -2u-\epsilon^2 u_{\eta\eta} \\ -2\epsilon v_\eta \end{pmatrix} =
2T \begin{pmatrix} u \\ 0 \end{pmatrix} + \epsilon^2\delta(\eta) P^L\begin{pmatrix} u \\ v \end{pmatrix} + \epsilon^2\delta(\eta-\epsilon) P^R \begin{pmatrix} u \\ -v \end{pmatrix} +O(\epsilon^3),
\end{equation}
where $u=u(\eta,0)$, $v=v(\eta,0)$, subscripts are used to denote derivatives and where we have used $\delta_{n,0}=\epsilon\delta(\eta)$ to move the defects to the long scale.

To analyse the solutions to the multi-scale problem \eqref{eq:discrete_multiscale}, we adopt the ansatz
\begin{equation} \label{hfh_ansatz2}
u = u_0+\epsilon u_1 + \epsilon^2 u_2+\dots,  \qquad v = v_0+\epsilon v_1 + \epsilon^2 v_2+\dots.
\end{equation}
We similarly assume that the frequency $\omega$ of a localised eigenmode can be expanded in terms of $\epsilon$, where the leading-order term in the expansion is the value of $\omega$ at one of the edges of the band gap. This is consistent with the anti-periodicity condition that was asserted in \eqref{hfh_ansatz1}. Substituting this expansion into $T(\omega)$ gives an expansion of the form
\begin{equation}
T(\omega) = T_0 + \epsilon T_1 + \epsilon^2 T_2+\dots.
\end{equation}
Likewise, we have that $P^L(\omega)=P_0^L+O(\epsilon)$ and $P^R(\omega)=P_0^R+O(\epsilon)$ for some matrices $P_0^L$ and $P_0^R$. In each case, the leading-order term in the expansion is the value at the edge of the band gap. Substituting this into \eqref{eq:discrete_multiscale} gives a hierarchy of equations to be solved. At leading order, we find that $-2u_0=2(T_0)_{11}u_0$, so $(T_0)_{11}=-1$, corresponding to the fact that the solution is anti-periodic at leading order. We also conclude from this that $u_0$ depends only on the long-scale variable $\eta$, so we can write that
\begin{equation}
u_0(\eta,0)=f(\eta)
\end{equation}
for some function $f:\R\to\R$. This function $f(\eta)$ describes how the amplitude of the localised eigenmode varies on the long scale; it captures the \emph{envelope} of the eigenmode and solving for it is the main aim of the analysis in this section. The order-$\epsilon$ terms reveal that $-2u_1=2(T_0)_{11}u_1+2(T_1)_{11}u_0$, from which we conclude that $(T_1)_{11}=0$ and that $u_1$ is an arbitrary multiple of $u_0$, which we absorb into $u_0$ without any loss of generality. They also tell us that $-2(u_0')_\eta=(T_1)_{21}u_0$. Finally, the second-order terms reveal that
\begin{equation} \label{eq:general_f}
0=\ddt{f}{\eta}-\T_f^2 f+\delta(\eta) \left( (P_0^L)_{11} f + (P_0^L)_{12} v_0 \right) + \delta(\eta-\epsilon) \left( (P_0^R)_{11} f - (P_0^R)_{12} v_0 \right),
\end{equation}
where we have defined the constant $\T_f:=\sqrt{-2(T_2)_{11}}$. To solve equations of the form \eqref{eq:general_f}, we have the following lemma:

\begin{lemma} \label{lem:f_solution}
Let $k\in\R$, $\alpha\in(0,\infty)$ and $g:\R\to\R$ be some continuous function. Then, the function
\begin{equation*}
f(\eta) = \frac{g(k)}{2{\alpha}} e^{-{\alpha}|\eta-k|}, \qquad \eta\in\R,
\end{equation*}
is a solution to the second-order ordinary differential equation
\begin{equation*}
0=\ddt{f}{\eta}- \alpha^2 f(\eta)+ \delta(\eta-k)g(\eta),
\end{equation*}
which decays as $\eta\to\pm\infty$.
\end{lemma}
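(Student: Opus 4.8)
The plan is to verify directly that the claimed function solves the ODE in the distributional sense, by checking the homogeneous equation away from $\eta = k$ and then matching the jump in the first derivative at $\eta = k$ against the Dirac mass. First I would observe that for $\eta \neq k$ the proposed $f$ is a constant multiple of $e^{\pm \alpha(\eta - k)}$, depending on the sign of $\eta - k$, and each of these exponentials satisfies $f'' - \alpha^2 f = 0$; hence the homogeneous part of the equation holds on $\R \setminus \{k\}$ and the decay as $\eta \to \pm\infty$ is immediate since $\alpha > 0$.

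Next I would handle the singular contribution at $\eta = k$. The function $f$ is continuous at $\eta = k$, with $f(k) = g(k)/(2\alpha)$, so $f''$ has no $\delta'$ component. Computing the one-sided derivatives, $f'(\eta) = -\tfrac{g(k)}{2}\,\mathrm{sgn}(\eta - k)\, e^{-\alpha|\eta - k|}$ for $\eta \neq k$, so that $f'(k^+) = -g(k)/2$ and $f'(k^-) = +g(k)/2$, giving a jump $[f']_{k} = f'(k^+) - f'(k^-) = -g(k)$. In the distributional sense this jump produces $f'' \supset -g(k)\,\delta(\eta - k)$. Since $g$ is continuous, $\delta(\eta - k) g(\eta) = g(k)\,\delta(\eta - k)$, so the term $\delta(\eta - k) g(\eta)$ exactly cancels the singular part of $f''$, and collecting the regular parts (which vanish by the first step) confirms that the full equation $0 = f'' - \alpha^2 f + \delta(\eta - k) g(\eta)$ holds as distributions.

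The only mild subtlety — and the step I expect to need the most care — is the bookkeeping of the distributional second derivative of a $C^0$, piecewise-$C^\infty$ function: one should note that differentiating $f'$ (which is itself continuous except for the jump at $k$) contributes the classical second derivative on each side plus $[f']_k\,\delta(\eta-k)$, with no $\delta'$ term precisely because $f$ itself has no jump. This is entirely routine, and a one-line alternative is to pair both sides with an arbitrary test function $\varphi \in C_c^\infty(\R)$, integrate by parts twice on $(-\infty,k)$ and $(k,\infty)$ separately, and check that the boundary terms assemble to $-g(k)\varphi(k) = -\int \delta(\eta-k)g(\eta)\varphi(\eta)\de\eta$. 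Either route closes the argument.
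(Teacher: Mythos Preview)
Your verification is correct: checking the homogeneous equation on $\R\setminus\{k\}$, continuity of $f$ at $k$, and the jump $[f']_k=-g(k)$ is exactly how one confirms that $\frac{g(k)}{2\alpha}e^{-\alpha|\eta-k|}$ solves the equation in the distributional sense, and the use of continuity of $g$ to replace $g(\eta)\delta(\eta-k)$ by $g(k)\delta(\eta-k)$ is the right justification. Note, however, that the paper does not actually supply a proof of this lemma; it is stated as a standard Green's function identity and used immediately. Your argument therefore fills in a detail the paper leaves to the reader, and does so correctly.
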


Using \Cref{lem:f_solution}, we see that \eqref{eq:general_f} has a localised solution given by
\begin{equation} \label{f_gen_solution}
f(\eta)=\tfrac{(P_0^L)_{11} f(0) + (P_0^L)_{12} v_0(0) }{2\T_f} e^{-\T_f|\eta|} + \tfrac{(P_0^R)_{11} f(\epsilon) - (P_0^R)_{12} v_0(\epsilon) }{2\T_f} e^{-\T_f|\eta-\epsilon|}.
\end{equation}

In the case that the defect is symmetric, we have that $P^L=P^R$ and expanding the terms in \eqref{f_gen_solution} about the point $\eta=\epsilon/2$ gives the following concise result:

\begin{theorem} \label{thm:HFH_envelope}
Suppose that a periodic, symmetric material has a bounded defect that is symmetric in the sense that $c(x_0+t)=c(x_1-t)$ for all $t\in(0,x_1-x_0)$. Then, under the homogenisation ansatz \eqref{hfh_ansatz2}, a localised eigenmode must satisfy
\begin{equation*}
|u(x_n)|=\exp\left(-\left((D_0^L)_{11}-(T_0)_{11}\right)|n-1/2|\right)+O(\epsilon) \qquad \text{for } n\in\Z,
\end{equation*}
when normalised such that $\max_n|u(x_n)|=1$.
\end{theorem}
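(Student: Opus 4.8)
The plan is to collapse the two-term envelope \eqref{f_gen_solution} into a single exponential using the symmetry of the defect, and then translate the result back onto the mesh. First I would exploit the hypothesis $c(x_0+t)=c(x_1-t)$: this makes the reflection $x\mapsto x_0+x_1-x$ a symmetry of the defect equation, so that if $\psi$ solves it then so does $\phi(x):=\psi(x_0+x_1-x)$; propagating the initial and terminal data of $\phi$ through the defect shows $SD^RS=D^L$, whence $\epsilon P^R=SD^RS-T=D^L-T=\epsilon P^L$ and therefore $P_0^L=P_0^R=:P_0$. Equation \eqref{f_gen_solution} then becomes
\[
f(\eta)=\frac{(P_0)_{11}f(0)+(P_0)_{12}v_0(0)}{2\T_f}\,e^{-\T_f|\eta|}+\frac{(P_0)_{11}f(\epsilon)-(P_0)_{12}v_0(\epsilon)}{2\T_f}\,e^{-\T_f|\eta-\epsilon|}.
\]

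Next I would recentre everything about the point $\eta=\epsilon/2$, which is the midpoint of the defect on the long scale. Because $f$ and $v_0$ are $O(1)$ and Lipschitz in $\eta$ (for $v_0$ this follows from the order-$\epsilon$ relation $-2(v_0)_\eta=(T_1)_{21}f$, which carries no $\delta$-forcing), we have $f(0)=f(\epsilon)+O(\epsilon)=f(\epsilon/2)+O(\epsilon)$ and $v_0(0)-v_0(\epsilon)=O(\epsilon)$, so the $v_0$-contributions cancel at leading order; moreover $|\eta|=|\eta-\tfrac\epsilon2|+O(\epsilon)$ and $|\eta-\epsilon|=|\eta-\tfrac\epsilon2|+O(\epsilon)$ uniformly in $\eta$. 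Hence $f(\eta)=\tfrac{(P_0)_{11}}{\T_f}\,f(\epsilon/2)\,e^{-\T_f|\eta-\epsilon/2|}+O(\epsilon)$, and evaluating at $\eta=\epsilon/2$ produces the self-consistency relation $\T_f=(P_0)_{11}+O(\epsilon)$ — which is precisely the equation fixing the leading eigenfrequency correction — so the prefactor is $1+O(\epsilon)$ and $f(\eta)=f(\epsilon/2)\,e^{-(P_0)_{11}|\eta-\epsilon/2|}+O(\epsilon)$.

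To finish I would return to the discrete variable. By the ansatz \eqref{hfh_ansatz2}, $u(x_n)=u_0(\epsilon n,0)+O(\epsilon)=f(\epsilon n)+O(\epsilon)$, and since $|\epsilon n-\epsilon/2|=\epsilon|n-\tfrac12|$ while the $(1,1)$ entry of $\epsilon P^L=D^L-T$ at the band edge reads $\epsilon(P_0)_{11}=(D_0^L)_{11}-(T_0)_{11}$, this becomes $|u(x_n)|=|f(\epsilon/2)|\exp\!\big(-((D_0^L)_{11}-(T_0)_{11})|n-\tfrac12|\big)+O(\epsilon)$. The exponent is nonnegative (as $\T_f=\sqrt{-2(T_2)_{11}}\ge0$ whenever a gap mode exists) and is itself $O(\epsilon)$, so $\max_n|u(x_n)|$ is attained, up to $O(\epsilon)$, at $n\in\{0,1\}$, with value $|f(\epsilon/2)|e^{-((D_0^L)_{11}-(T_0)_{11})/2}+O(\epsilon)=|f(\epsilon/2)|+O(\epsilon)$; normalising this to equal $1$ forces $|f(\epsilon/2)|=1+O(\epsilon)$, and absorbing that prefactor into the error term gives the stated formula.

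The step I expect to be the main obstacle is making the recentring uniform in $n$. Since $\T_f-(P_0)_{11}=O(\epsilon)$ appears inside an exponent multiplied by the unbounded quantity $\epsilon|n-\tfrac12|$, replacing $\T_f$ by $(P_0)_{11}$ must be controlled through $|e^{-\T_f s}-e^{-(P_0)_{11}s}|\le e^{-\min\{\T_f,(P_0)_{11}\}s}\,|\T_f-(P_0)_{11}|\,s$ combined with $\sup_{s\ge0}se^{-cs}<\infty$; similarly one must check that the discarded higher-order terms $\epsilon u_1,\epsilon^2 u_2,\dots$ — bounded multiples of the decaying envelope plus localised corrections concentrated near $n=0,1$ — are $O(\epsilon)$ for every $n$, not merely on a fixed window. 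A subsidiary point is justifying the long-scale regularity of $v_0$ used above.
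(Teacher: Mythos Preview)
Your proposal is correct and follows essentially the same route as the paper: use the defect symmetry to get $P_0^L=P_0^R$, expand \eqref{f_gen_solution} about $\eta=\epsilon/2$ so that the $v_0$-terms cancel, read off the consistency condition $\T_f=(P_0^L)_{11}$, and then return to the discrete variable via $\epsilon(P_0^L)_{11}=(D_0^L)_{11}-(T_0)_{11}$. The paper's proof is much terser and omits the points you flag (the justification of $SD^RS=D^L$, the uniformity of the $O(\epsilon)$ error in $n$, and the normalisation step), so your additional care there is warranted rather than superfluous.
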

\begin{proof}
Expanding \eqref{f_gen_solution} about the point $\eta=\epsilon/2$ and using the fact that $P_0^L=P_0^R$ gives that
\begin{equation}
f(\eta)=\frac{(P_0^L)_{11} f(\epsilon/2)}{\T_f} e^{-\T_f|\eta-\epsilon/2|}+O(\epsilon),
\end{equation}
from which we can see that $(P_0^L)_{11}=\T_f$. Swapping coordinates back to the discrete short-scale variable and substituting the definition of $P^L$ gives the result.
\end{proof}

In the case that the defect is not symmetric, it is slightly harder to find the constants in \eqref{f_gen_solution} as the coefficients of $v_0$ do not generally cancel each other out when the two terms are expanded about $\eta=\epsilon/2$. However, this can be overcome by repeating the above analysis to find the leading-order part of the derivative $v_0$. Subtracting \eqref{eq:transfer_defect_right} from \eqref{eq:transfer_defect} gives the difference equation
\begin{equation*} 
\begin{pmatrix} u(x_{n+1}) - u(x_{n-1}) \\ v(x_{n+1})+v(x_{n-1}) \end{pmatrix} =
2T \begin{pmatrix} 0 \\ v(x_n) \end{pmatrix} + \delta_{n,0}\epsilon P^L\begin{pmatrix} u(x_n) \\ v(x_n) \end{pmatrix}
+\delta_{n,1} \epsilon P^R \begin{pmatrix} -u(x_n) \\ v(x_n) \end{pmatrix}.
\end{equation*}
Repeating the homogenisation method used above, we see that $(T_0)_{22}=-1$ and that we can write $v_0(\eta,0)=g(\eta)$ for some function $g:\R\to\R$. Continuing to higher orders, we find that $(T_1)_{22}=0$ and, subsequently, that
\begin{equation} \label{eq:general_g}
0=\ddt{g}{\eta}-\T_g^2 g+\delta(\eta) \left( (P_0^L)_{21} f + (P_0^L)_{22} g \right) + \delta(\eta-\epsilon) \left( -(P_0^R)_{21} f + (P_0^R)_{22} g \right),
\end{equation}
where $\T_g:=\sqrt{-2(T_2)_{22}}$. We use \Cref{lem:f_solution} to find that the localised solution is given by
\begin{equation} \label{g_gen_solution}
g(\eta)=\tfrac{(P_0^L)_{21} f(0) + (P_0^L)_{22} g(0) }{2\T_g} e^{-\T_g|\eta|} + \tfrac{-(P_0^R)_{21} f(\epsilon) + (P_0^R)_{22} g(\epsilon) }{2\T_g} e^{-\T_g|\eta-\epsilon|}.
\end{equation}
Finally, we can then use \eqref{f_gen_solution} and \eqref{g_gen_solution} to formulate a matrix eigenvalue problem for $(f(0),g(0),f(\epsilon),g(\epsilon))^\top$, which we can solve to find $\T_f$ and $\T_g$ in terms of $P_0^L$ and $P_0^R$.

\subsection{Example: local perturbation} \label{sec:local} 

The simplest examples which can be handled using the methods developed in this work are those of \emph{local defects} (often known as \emph{point defects} in discrete settings). In this case, the defect region has the same width as the repeating unit cell (\emph{i.e.} $x_1-x_0=h$). This means that the defect medium can be obtained from the periodic medium by only changing the material within defect region. This is a \emph{compact} perturbation which preserves the underlying band structure.

To demonstrate the method, we study an example of a locally perturbed piecewise constant medium, given by
\begin{equation} \label{eq:c_defect}
c_{R-r}(x) = \begin{cases}
\frac{1}{r} & 2n+\frac{1}{2}\leq x<2n+\frac{3}{2}, n\in\Z\setminus\{0\}, \\
\frac{1}{R} & \frac{1}{2}\leq x<\frac{3}{2}, \\
1 & 2n-\frac{1}{2}\leq x<2n+\frac{1}{2}, n\in\Z,
\end{cases}
\end{equation}
for some positive constants $r$ and $R$. This is depicted in \Cref{fig:local}. This was studied previously by \cite{figotin1998localized, figotin1997localized} and can be viewed as a continuous version of the discrete model studied in \cite{makwana2013localised} and as a one-dimensional version of the two-dimensional model studied using boundary integral analysis in \cite{ammari2021functional}. Further to these existing results, we can use our methods to calculate the (leading-order) envelope of the localised mode.

\begin{figure}
\centering
\includegraphics[width=0.8\linewidth]{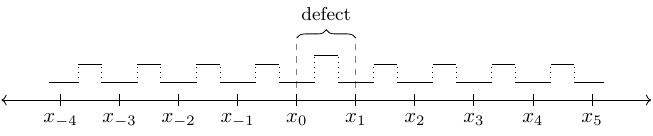}
\caption{A symmetric and piecewise constant periodic medium which is locally perturbed within the defect region, as defined in \eqref{eq:c_defect}, supports localised eigenmodes which can be described using this theory.} \label{fig:local}
\end{figure}

\begin{figure}
\centering
\begin{subfigure}[b]{0.7\linewidth}
\includegraphics[width=\linewidth]{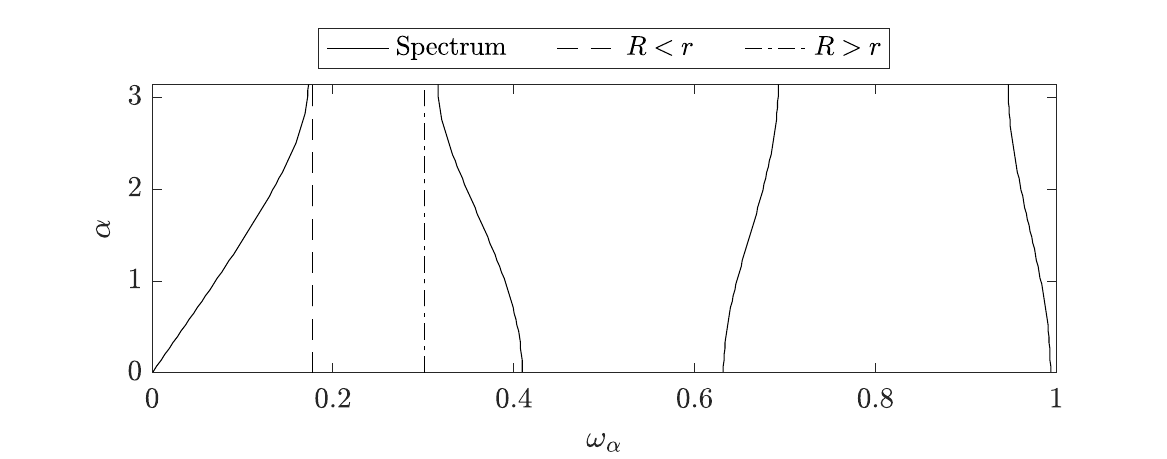}
\caption{} \label{PDspec}
\end{subfigure}

\begin{subfigure}[b]{0.43\linewidth}
\includegraphics[width=\linewidth]{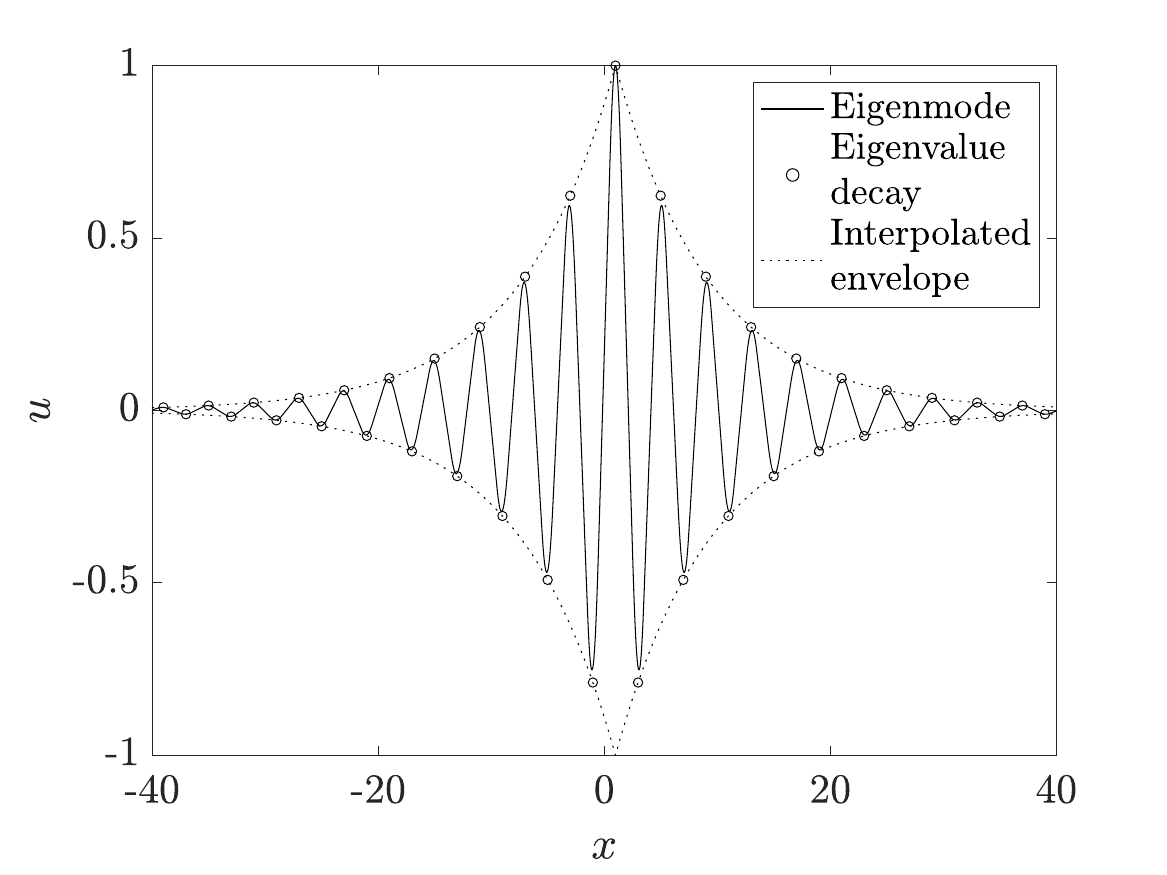}
\caption{} \label{TM_Rless}
\end{subfigure}
\begin{subfigure}[b]{0.43\linewidth}
\includegraphics[width=\linewidth]{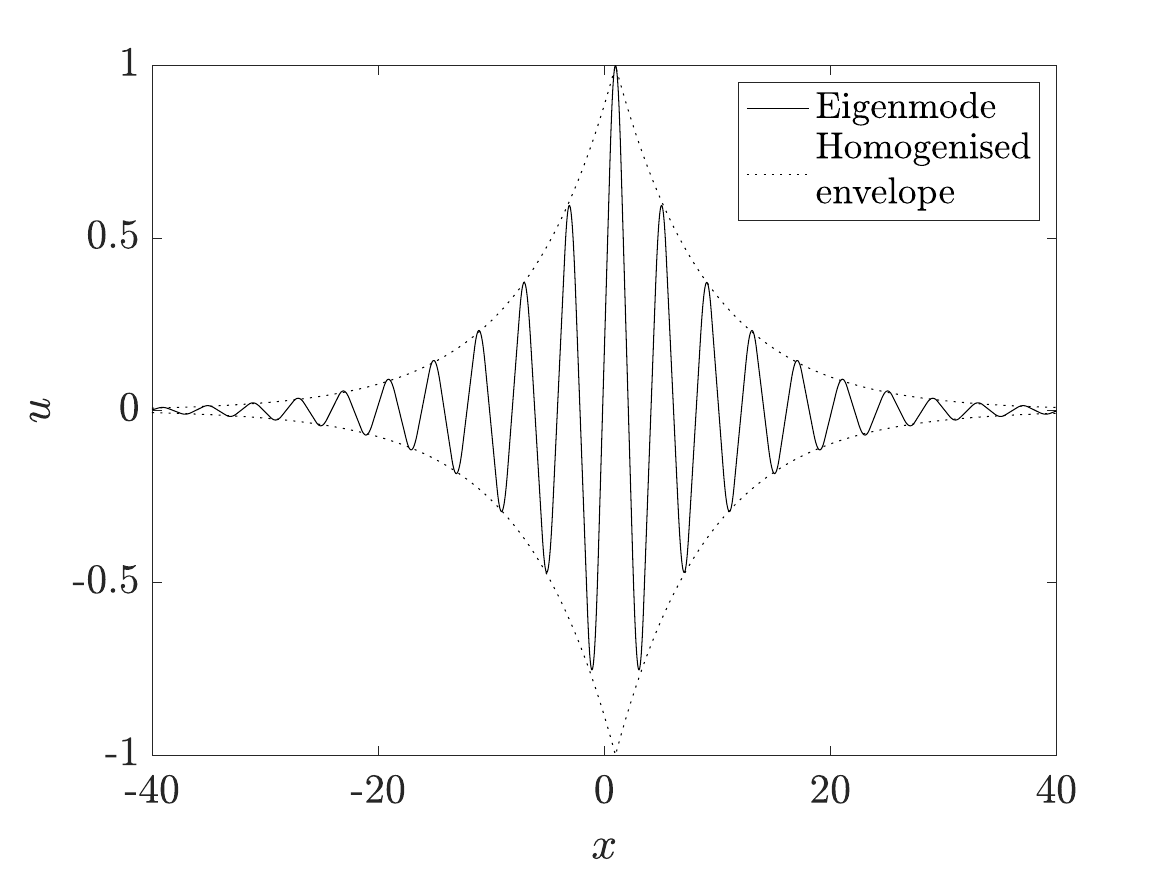}
\caption{} \label{HFH_Rless}
\end{subfigure}

\begin{subfigure}[b]{0.43\linewidth}
\includegraphics[width=\linewidth]{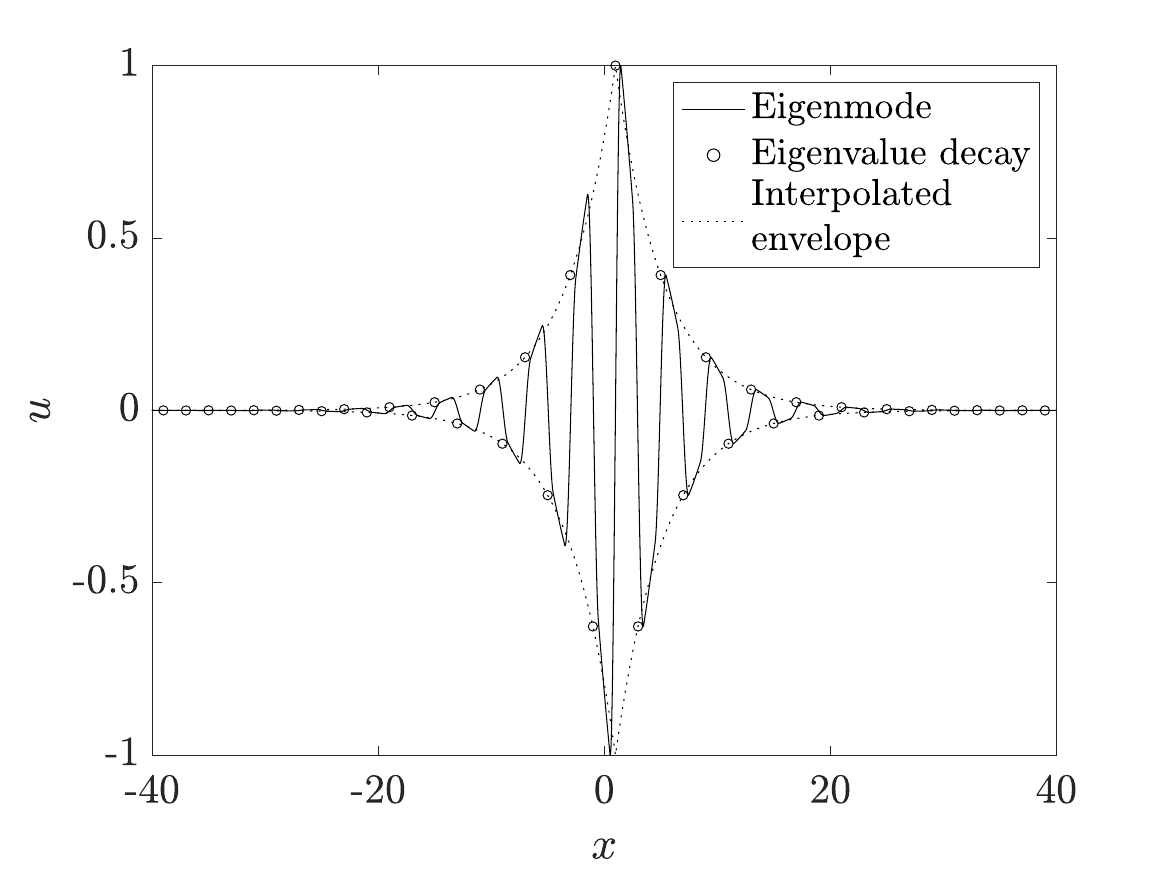}
\caption{} \label{TM_Rmore}
\end{subfigure}
\begin{subfigure}[b]{0.43\linewidth}
\includegraphics[width=\linewidth]{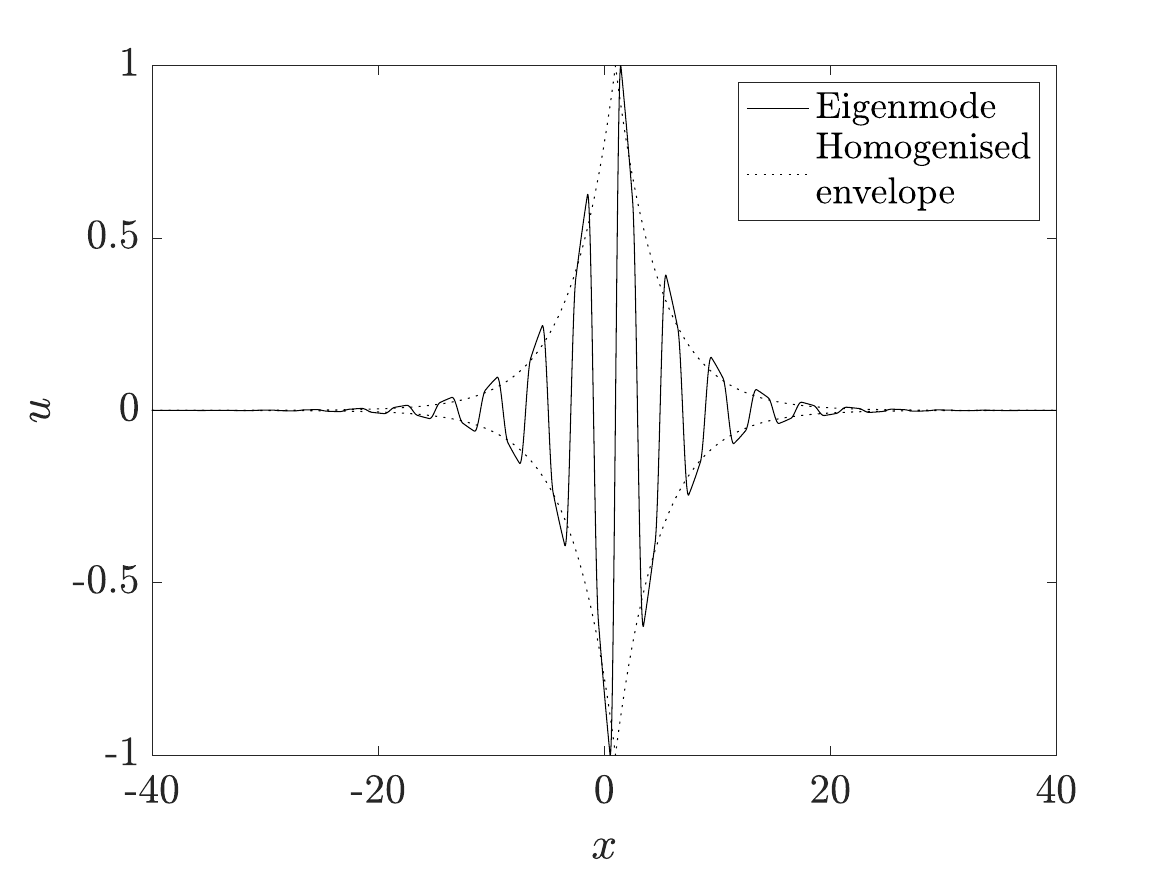}
\caption{} \label{HFH_Rmore}
\end{subfigure}
\caption{The localised eigenmodes of the locally perturbed medium \eqref{eq:c_defect}. The spectrum of the unperturbed periodic medium is shown in (a), along with the frequencies of the localised eigenmodes that exist for $R<r$ and $R>r$. The localised eigenmodes are shown in (b) and (c) for $R<r$ and in (d) and (e) for $R>r$. In (b) and (d), envelope functions are computed using transfer matrix eigenvalues (\Cref{cor:eigenval_decay}) and the homogenised envelopes are shown in (c) and (e). Here, $R=r\pm0.1$ and $r=10$.} \label{fig:PointDefectModes}
\end{figure}

We can compute the transfer matrix explicitly as
\begin{equation} \label{eq:T_PointDefect}
    T(\omega) = T_\text{c}(1,\tfrac{1}{2},\omega)\, T_\text{c}(r,1,\omega)\, T_\text{c}(1,\tfrac{1}{2},\omega),
\end{equation}
where the matrix $T_\text{c}\in\R^{2\times2}$ is used to describe each piecewise constant section and is given by
\begin{equation} \label{T_piecewise}
    T_\text{c}(r,l,\omega):=\begin{pmatrix} \cos(r l \omega) & \frac{1}{r\omega}\sin(r l \omega) \\ -r\omega\sin(r l \omega) & \cos(r l \omega) \end{pmatrix}.
\end{equation}
This gives that
\begin{align*}
T = {\small\begin{pmatrix}
\cos(\omega)\cos(\omega r)-\frac{r^2+1}{2r}\sin(\omega)\sin(\omega r) &
T_{12}(\omega) \\
-\omega^2 T_{12}(\omega)+\omega\frac{1-r^2}{r}\sin(\omega r) &
\cos(\omega)\cos(\omega r)-\frac{r^2+1}{2r}\sin(\omega)\sin(\omega r)
\end{pmatrix}},\\
\text{where  } T_{12}(\omega)=\frac{1}{\omega}\left( \sin(\omega)\cos(\omega r) + \left( \frac{1-r^2}{2r}+\frac{1+r^2}{2r}\cos(\omega) \right)\sin(\omega r) \right). \nonumber
\end{align*}
The defect matrix $D^L$ can be found by replacing $r$ with $R$ in the expression for $T$. With this in hand, we use \eqref{thm:localised} to find the frequencies of localised eigenmodes when $R=r\pm\epsilon$ for some $0<\epsilon\ll1$. This can be done explicitly, thanks to \eqref{eq:T_PointDefect}, however the expressions are too arduous to include here. The outcome of solving the equation derived in \Cref{thm:localised} numerically is shown in \Cref{PDspec}. We see that localised eigenmodes exist in both the $R<r$ and $R>r$ cases. When $R<r$, the localised eigenmode is a perturbation of the first mode of the system and is shown in \Cref{TM_Rless,HFH_Rless}. When $R<r$, the localised eigenmode is a perturbation of the second mode of the system and is shown in \Cref{TM_Rmore,HFH_Rmore}. The reason for the different frequency values for the localised eigenmodes in the two cases is the requirement from \Cref{thm:HFH_envelope} that $(D_0^L)_{11}(\omega)-(T_0)_{11}(\omega)>0$. We have that
\begin{equation}
\begin{split}
    (D_0^L)_{11}(\omega)-(T_0)_{11}(\omega)&=\cos(\omega)\cos(\omega R)-\frac{R^2+1}{2R}\sin(\omega)\sin(\omega R)\\&\qquad-\cos(\omega)\cos(\omega r)+\frac{r^2+1}{2r}\sin(\omega)\sin(\omega r).
\end{split}
\end{equation}
In \Cref{fig:PointDefect_P} we show how $(D_0^L)_{11}(\omega)-(T_0)_{11}(\omega)$ varies as a function of $\omega$ in the band gap of the periodic medium. We see that if $R<r$ then $(D_0^L)_{11}(\omega)-(T_0)_{11}(\omega)>0$ for $\omega$ at the lower end of the band gap, whereas the condition is satisfied at the upper end of the band gap if $R<r$.

\begin{figure}
    \centering
    \includegraphics[width=0.7\linewidth]{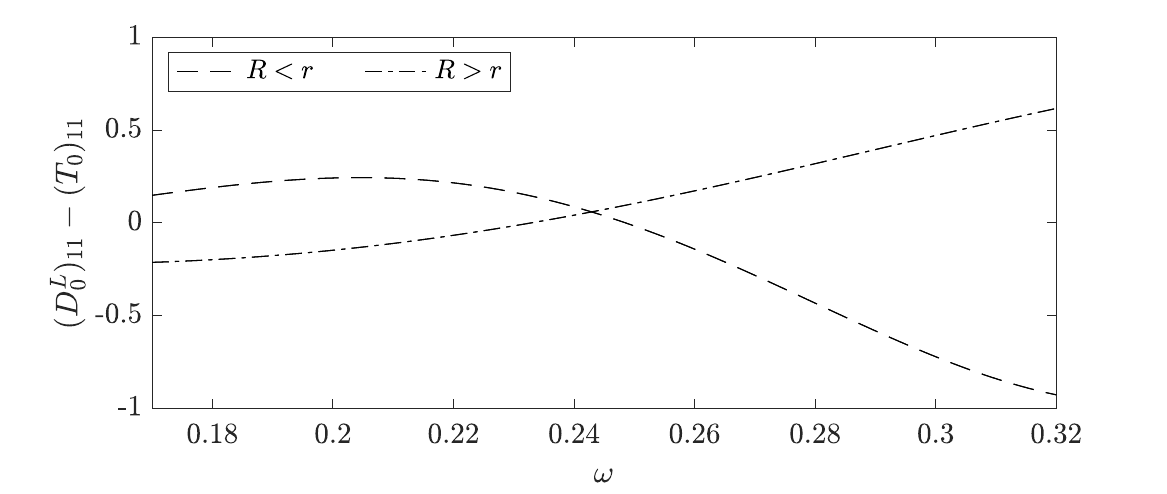}
    \caption{For a localised eigenmode to exist, the defect must be such that $(D_0^L)_{11}-(T_0)_{11}>0$ at the frequency of the defect mode. Here, we show how $(D_0^L)_{11}(\omega)-(T_0)_{11}(\omega)$ varies for $\omega$ in the first band gap of the locally perturbed medium \eqref{eq:c_defect} with $R=r\pm0.1$ and $r=10$.}
    \label{fig:PointDefect_P}
\end{figure}

We can use \Cref{cor:eigenval_decay} to use the eigenvalues of the transfer matrix $T$ to estimate the rate at which the localised eigenmodes decay away from the defect region. This is shown in \Cref{TM_Rless,TM_Rmore}, where the circles depict the decaying sequence of discrete values and the dotted line is an interpolated exponential envelope. Similarly, since the medium is symmetric within the defect region, we can use \Cref{thm:HFH_envelope} to estimate the envelope function using high-frequency homogenisation. This is shown in \Cref{HFH_Rless,HFH_Rmore}. There is good agreement between the two approaches, with the transfer matrix slightly underestimating the rate of decay, as expected: for $R<r$ the homogenised envelope gives a coefficient $\T_f=1.23$ and the interpolated transfer matrix eigenvalues give $\T_f=1.13$, when $\epsilon=0.1$. For $R>r$, the homogenised envelope gives a coefficient $\T_f=2.57$ and the interpolated transfer matrix eigenvalues give $\T_f=2.34$, again for $\epsilon=0.1$.

\subsection{Example: topological perturbation} \label{sec:noncompact}

In this section, we study a periodic medium that is composed of alternating short and long gaps between material inclusions. This is inspired by the Su-Schrieffer-Heeger model, which has alternating coupling parameters between atoms. The periodic medium is defined as
\begin{equation} \label{eq:c_SSH}
c_0(x) = \begin{cases}
\frac{1}{r} & x\in[n+\frac{1-d-a}{2},n+\frac{1-d+a}{2})\cup[n+\frac{1+d-a}{2},n+\frac{1+d+a}{2}), n\in\Z,\\
1 & \text{otherwise},
\end{cases}
\end{equation}
for some parameters $r\neq1$, $0<a<\frac{1}{2}$ and $a<d<1-a$. This medium is periodic and symmetric so can be handled using the theory developed here. Since it is piecewise constant, the transfer matrix $T$ can be written as
\begin{equation}
    T=T_\text{c}(1,\tfrac{1-d-a}{2},\omega) T_\text{c}(r,a,\omega) T_\text{c}(1,d-a,\omega) T_\text{c}(r,a,\omega) T_\text{c}(1,\tfrac{1-d-a}{2},\omega),
\end{equation}
where the matrix $T_\text{c}$ was defined in \eqref{T_piecewise}. When $d=1/2$ the first two bands stick together but for $d\neq1/2$ a band gap opens, as depicted in \Cref{fig:SSHbands}. 

It turns out that the band gap depicted in \Cref{fig:SSHbands} is \emph{topologically non-trivial}, in the sense that associated topological indices can take non-trivial values for certain values of $d$. In the context of these one-dimensional models, the natural topological quantity to consider is the Zak phase \cite{zak1989phase}, defined as
\begin{equation}
    \varphi_\mathrm{z} = \i\int_{-\pi/h}^{\pi/h} \langle u_\alpha , \ddp{}{\alpha} u_\alpha\rangle \de \alpha,
\end{equation}
where $\langle\cdot,\cdot\rangle$ is the $L^2([0,h])$ inner product and $u_\alpha\in L^2_\alpha(\R)$ is the solution to \eqref{eq:helmholtz} with the dependence on the Bloch parameter $\alpha$ added for emphasis. It can be shown for this example that $\varphi_\mathrm{z}=0$ when $d<1/2$ whereas $\varphi_\mathrm{z}=\pi$ when $d>1/2$. It is in this sense that the band gap that opens when $d>1/2$ is said to be topologically non-trivial.

The crucial principle of topological localised modes is that localised eigenmodes typically exist at interfaces between two materials that are associated to different values of a topological index. Extensive theory supporting this principle and proving the existence of these topological localised modes has been developed, particularly in one-dimensional systems, for example by \cite{drouot2020defect, fefferman2017topologically, lin2021mathematical}. In \Cref{sec:half}, we will study the typical Su-Schrieffer-Heeger interface problem, where a material with $d<1/2$ (meaning $\varphi_\mathrm{z}=0$) is joined to one with $d>1/2$ (meaning $\varphi_\mathrm{z}=\pi$). In this section, the dislocated Su-Schrieffer-Heeger structure can also be interpreted in terms of this theory as it can be viewed as two semi-infinite materials with $d>1/2$ (meaning $\varphi_\mathrm{z}=\pi$) that are separated by a homogeneous piece of material with length $l$ (which, trivially, has $\varphi_\mathrm{z}=0$). See \emph{e.g.} \cite{ammari2020topologically, fefferman2017topologically, khanikaev2013photonic, lin2021mathematical} for more details on the notion of topological protection and developments to multi-dimensional systems.

\begin{figure}
\centering
\begin{subfigure}[b]{0.45\linewidth}
\includegraphics[width=\linewidth]{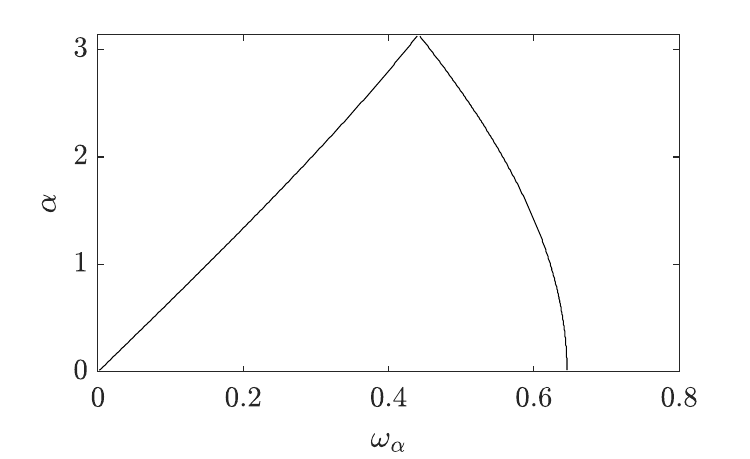}
\caption{} \label{SSH_stuck}
\end{subfigure}
\begin{subfigure}[b]{0.45\linewidth}
\includegraphics[width=\linewidth]{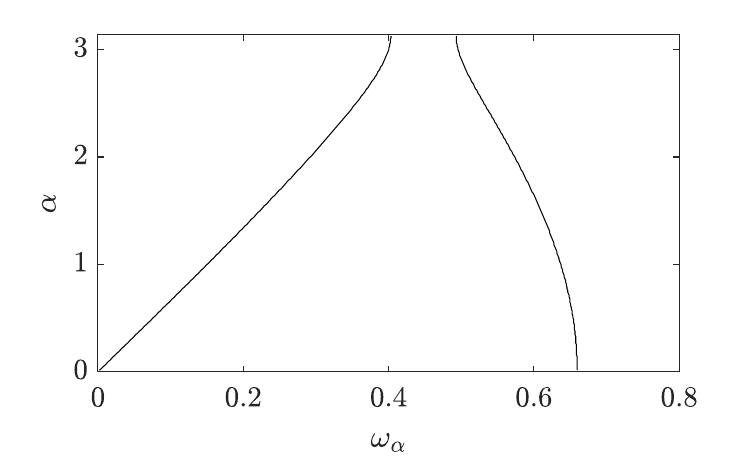}
\caption{} \label{SSH_open}
\end{subfigure}

\vspace{-0.5cm}

\caption{The spectral bands of the Su-Schrieffer-Heeger medium \eqref{eq:c_SSH}, for (a) $d=1/2$ and (b) $d\neq1/2$. If $d\neq1/2$, there is a band gap between the first two spectral bands.} \label{fig:SSHbands}
\end{figure}

\begin{figure}
\centering
\includegraphics[width=0.8\linewidth]{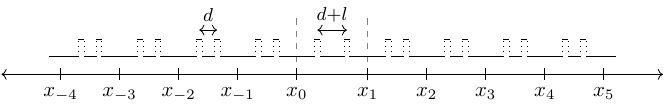}
\caption{Dislocating the Su-Schrieffer-Heeger medium yields a symmetric and piecewise constant periodic medium, as defined in \eqref{eq:c_defect}, which is locally perturbed within the defect region and supports localised eigenmodes which can be described using this theory.} \label{fig:dislocation}
\end{figure}

We introduce a defect to \eqref{eq:c_SSH} by introducing a dislocation of length $l>0$ to the centre of one of the unit cells. This gives the medium
\begin{equation} \label{eq:c_SSH_dislocated}
c_l(x) = \begin{cases}
\frac{1}{r} & x\in\scaleto{[n+\frac{1-d-a}{2},n+\frac{1-d+a}{2})\cup[n+\frac{1+d-a}{2},n+\frac{1+d+a}{2}), n\in\Z^{<0},}{0.4cm}\\
\frac{1}{r} & x\in[\frac{1-d-a}{2},\frac{1-d+a}{2}),\, x-l\in[\frac{1+d-a}{2},\frac{1+d+a}{2}),\\
\frac{1}{r} & x-l\in\scaleto{[n+\frac{1-d-a}{2},n+\frac{1-d+a}{2})\cup[n+\frac{1+d-a}{2},n+\frac{1+d+a}{2}), n\in\Z^{\geq0},}{0.38cm}\\
1 & \text{otherwise},
\end{cases}
\end{equation}
which is depicted in \Cref{fig:dislocation}. If the dislocation $d$ is not equal to an integer multiple of the periodicity $h$, then the perturbation is non-compact since the periodic material is shifted on either side to accomodate the defect. A three-dimensional version of this material was studied in \cite{ammari2020robust}, where it was shown that two localised eigenmodes are introduced with eigenfrequencies in the first band gap and these eigenfrequencies converge to a point in the middle of the band gap as the dislocation becomes arbitrarily large. Similar dislocated media have also been studied in the setting of Schr\"odinger operators \cite{drouot2020defect, gontier2020edge}.

\begin{figure}
\centering
\includegraphics[width=0.6\linewidth]{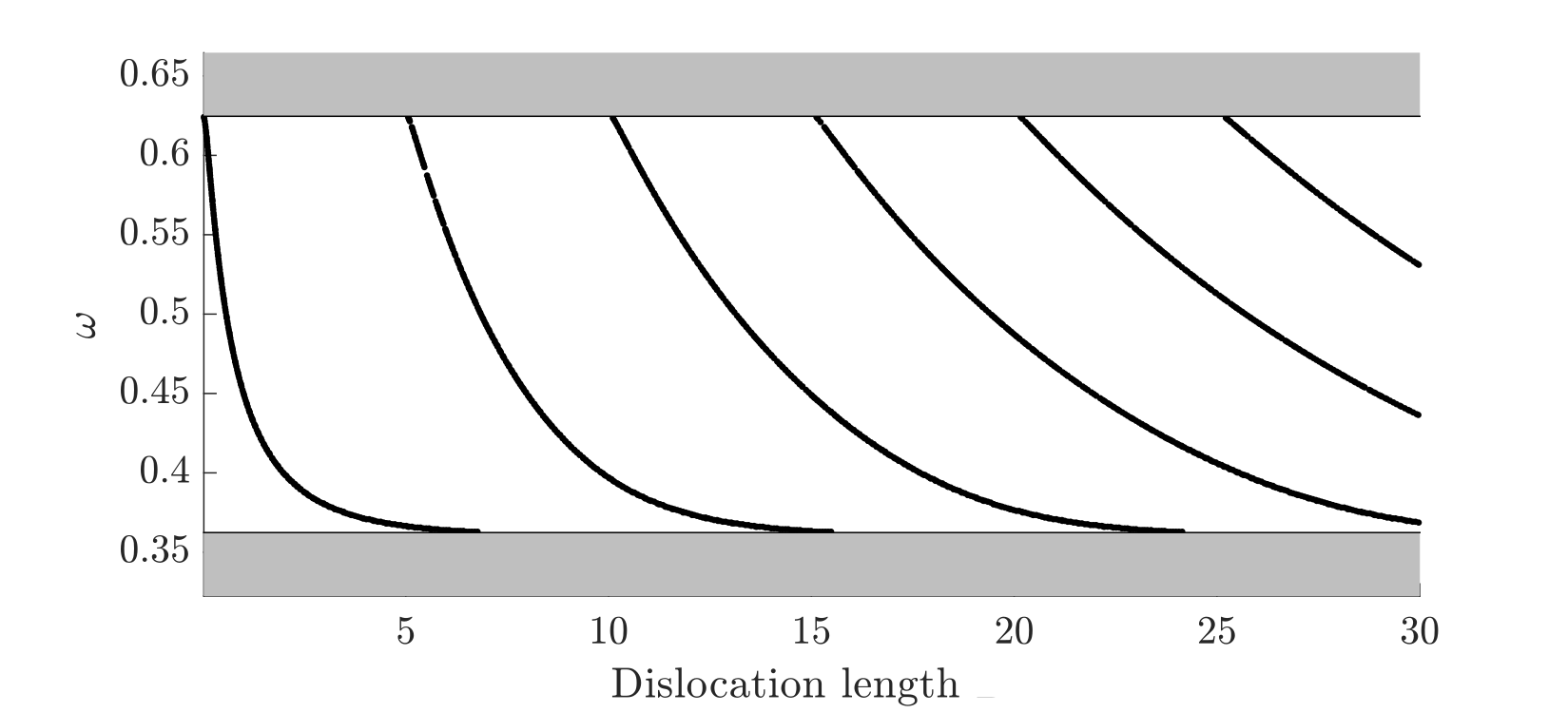}
\caption{Localised eigenmodes are created when a dislocation is introduced to the Su-Schrieffer-Heeger medium \eqref{eq:c_SSH}. The shaded regions are the spectrum of the unperturbed medium. As the dislocation length $l$ increases, each mode's eigenfrequency crosses the band gap. Here, $r=10$, $d=0.25$ and $a=0.1$.} \label{fig:dislocation_bands}
\end{figure}

When the medium is perturbed by the introduction of a dislocation, we can use \Cref{thm:localised} to find the eigenfrequencies of the localised eigenmodes. The result of solving the resulting equation is shown in \Cref{fig:dislocation_bands}. We see that successive localised eigenmodes are created, with eigenfrequencies lying within the first spectral band gap.

\begin{figure}
\centering
\begin{subfigure}[b]{0.43\linewidth}
\includegraphics[width=\linewidth]{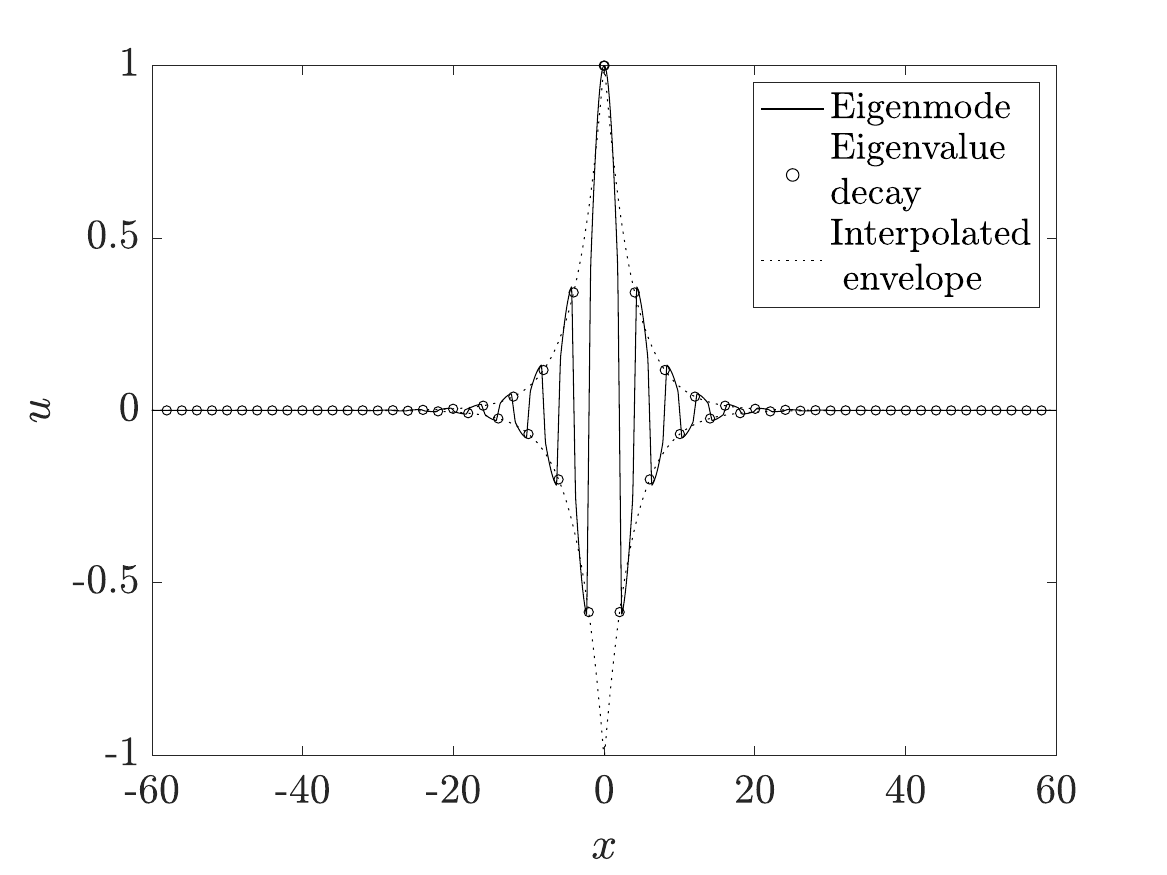}
\caption{} \label{TM_disloc1}
\end{subfigure}
\begin{subfigure}[b]{0.43\linewidth}
\includegraphics[width=\linewidth]{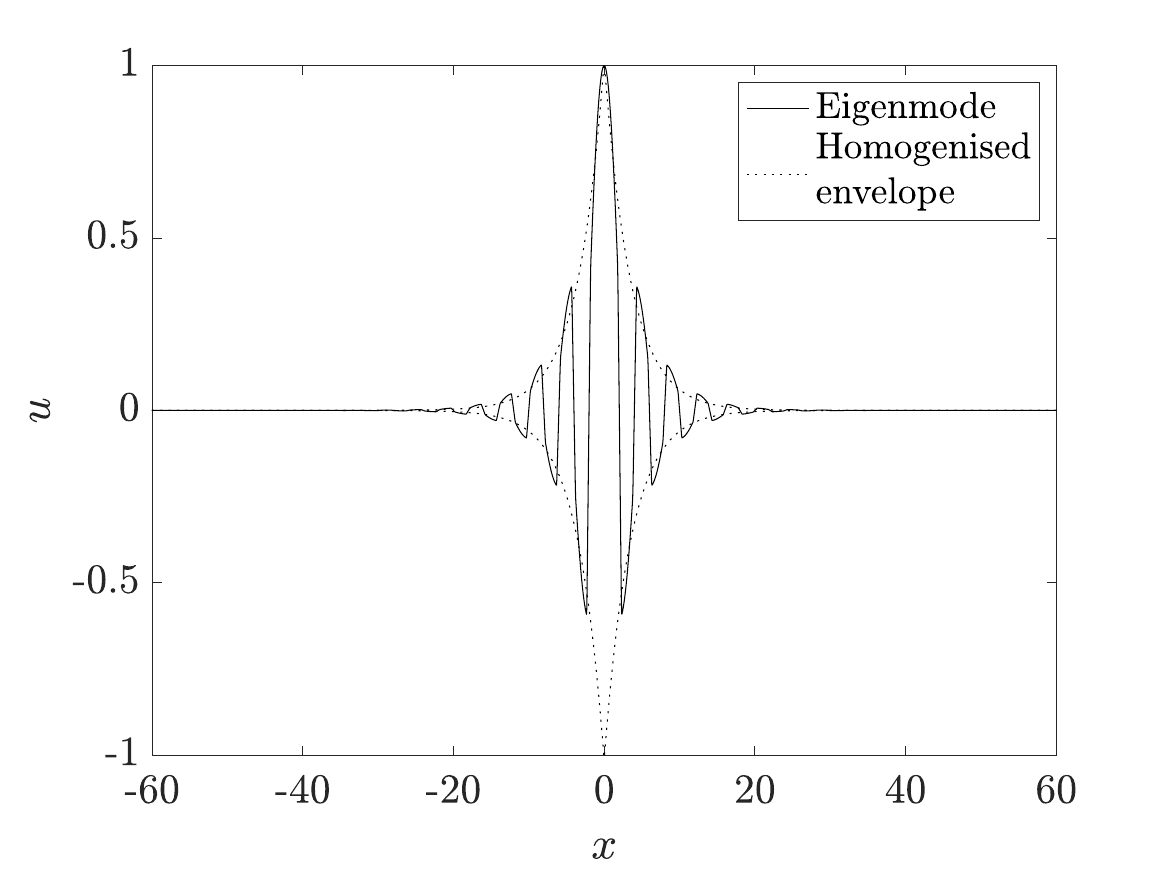}
\caption{} \label{HFH_disloc1}
\end{subfigure}

\vspace{-0.5cm}

\caption{The localised eigenmode of the dislocated Su-Schrieffer-Heeger medium \eqref{eq:c_SSH_dislocated} for dislocation length $l=0.1$. The mode has eigenfrequency $\omega=0.609$ and is shown with the transfer matrix eigenvalue envelope in (a) and the homogenised envelope in (b). The parameter values $r=10$, $d=0.25$ and $a=0.1$ are used.} \label{fig:disloc_modes_01}
\end{figure}

For small dislocation lengths $l$, we can use $\epsilon=l$ as the small parameter to use the high-frequency homogenisation results from \Cref{sec:HFH}. In particular, since the defect region is symmetric, we can use \Cref{thm:HFH_envelope} to find the homogenised envelope of the localised eigenmode. For $\epsilon=0.1$ the localised eigenmode is shown in \Cref{fig:disloc_modes_01}. Both the envelope interpolated from the transfer matrix eigenvalues and the homogenised envelope approximate the decay rate well.

To understand the nature of the curves in \Cref{fig:dislocation_bands}, it is informative to examine the localised eigenmodes that exist at larger dislocation lengths. What we find is that each successive curve of localised eigenmodes in \Cref{fig:dislocation_bands} has an additional oscillation within the defect region. At the dislocation length $l=22$, for example, the third, fourth and fifth curves persist in \Cref{fig:dislocation_bands}, so we expect the modes to have three, four and five oscillations within the defect region. This is confirmed by the plots in \Cref{fig:disloc22}. In this case, since the perturbation to the medium is very large, we do not generally expect the the homogenisation to accurately predict the decay rate (the transfer matrix eigenvalues will work regardless, as shown in \Cref{fig:disloc22}). However, the crucial distance that we require to be small in order for the high-frequency homogenisation to work is the distance of the localised frequency from the edge of the band gap. Since, in the $l=22$ case, the third eigenmode ($\omega=0.58$) has a frequency that is relatively close to the edge of the band gap (at $\omega=0.60$) the homogenised envelope still approximates the rate of decay well, as shown in \Cref{HFH_22_disloc3}. To study the modes whose eigenfrequencies are far from the edge of the band gap, a Wentzel-Kramers-Brillouin (WKB) ansatz can be used, as shown in \cite{schnitzer2017waves}.

\begin{figure}
\centering
\begin{subfigure}[b]{0.43\linewidth}
\includegraphics[width=\linewidth]{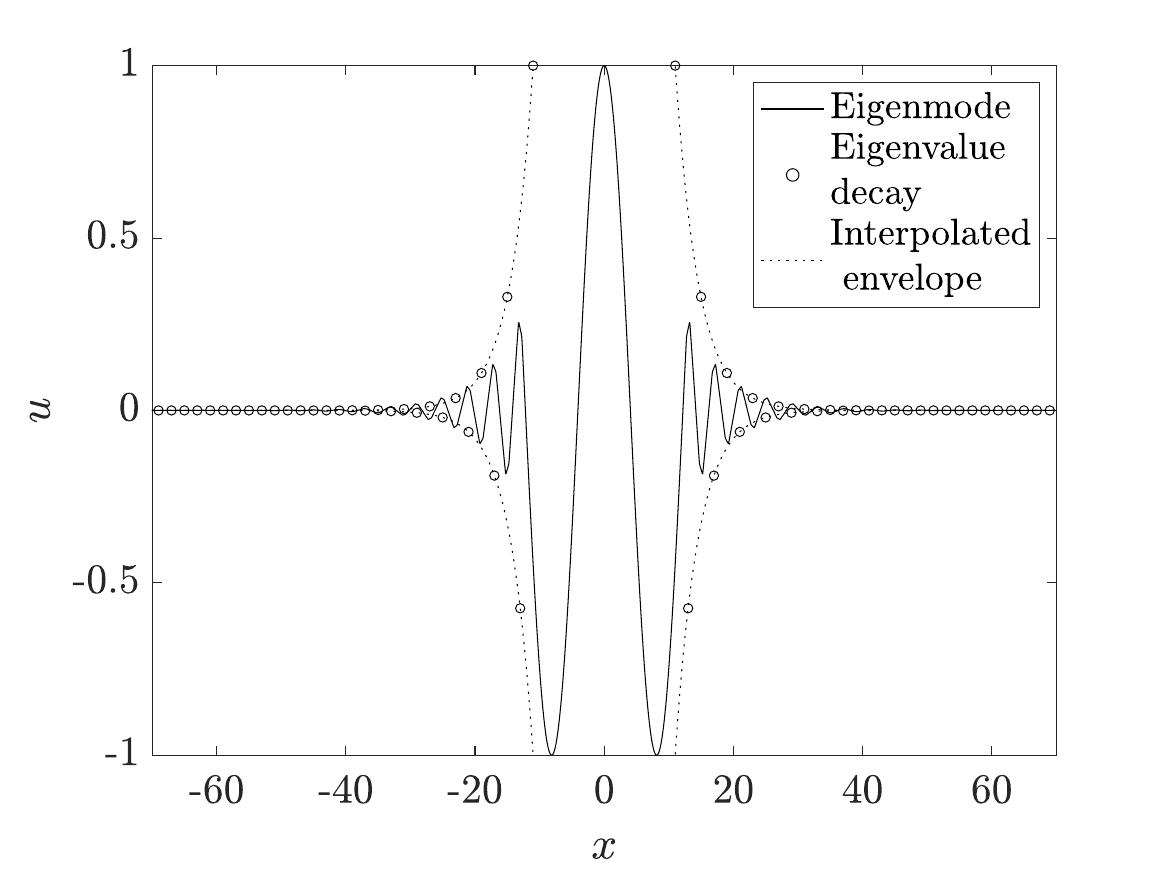}
\caption{} 
\end{subfigure}
\begin{subfigure}[b]{0.43\linewidth}
\includegraphics[width=\linewidth]{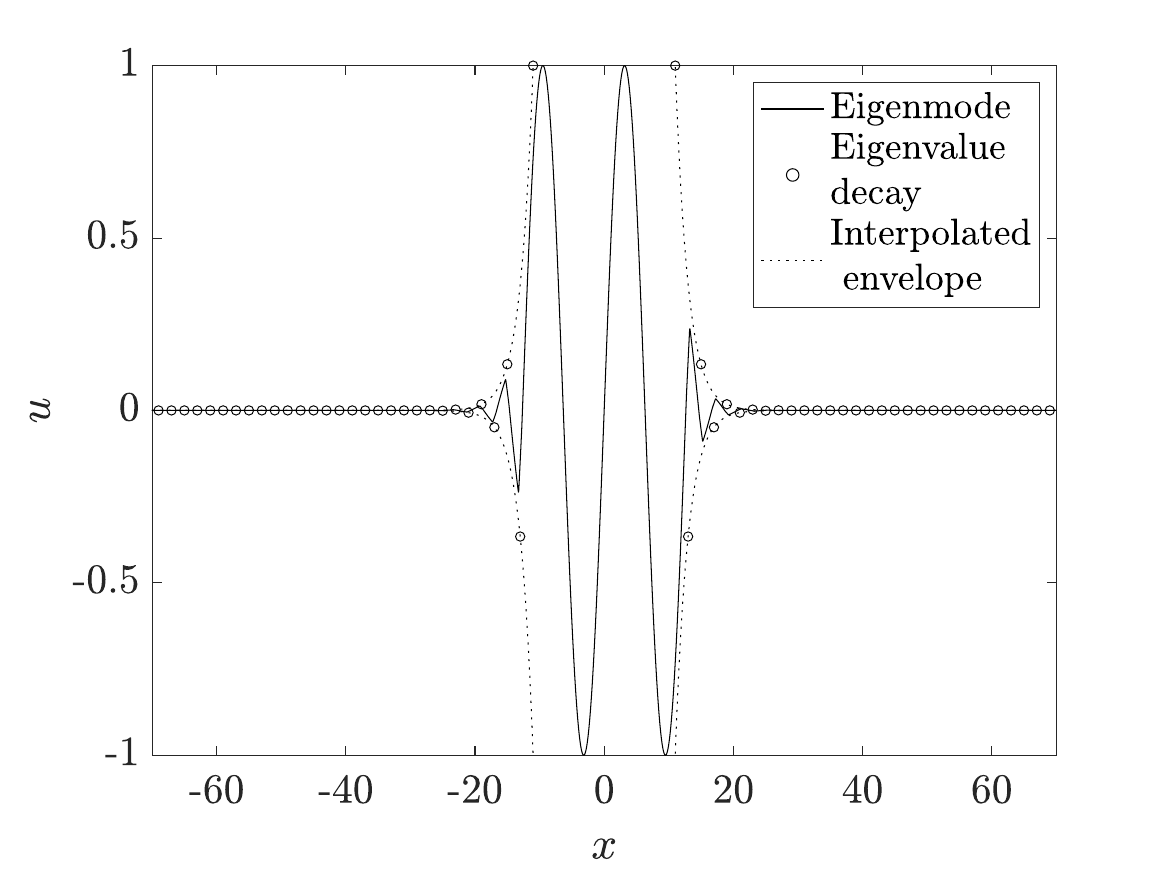}
\caption{} 
\end{subfigure}

\begin{subfigure}[b]{0.43\linewidth}
\includegraphics[width=\linewidth]{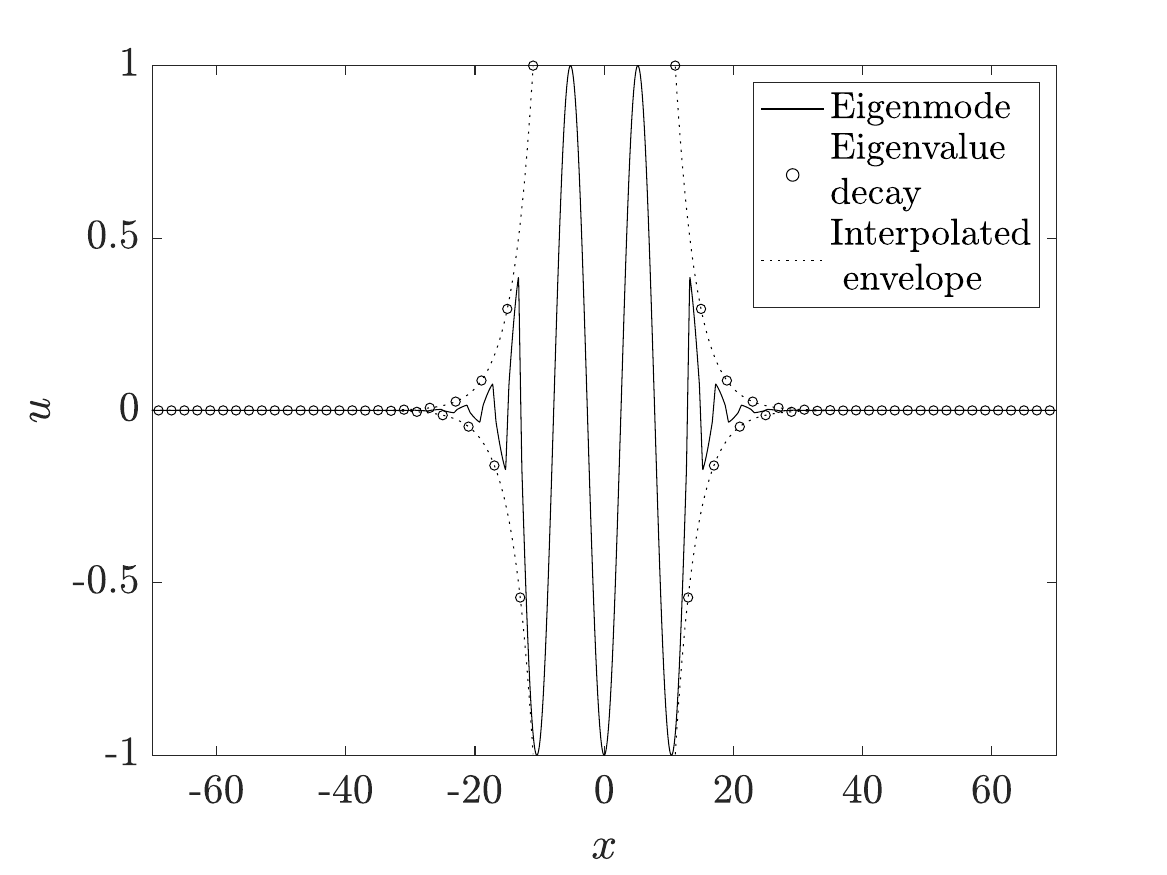}
\caption{}
\end{subfigure}
\begin{subfigure}[b]{0.43\linewidth}
\includegraphics[width=\linewidth]{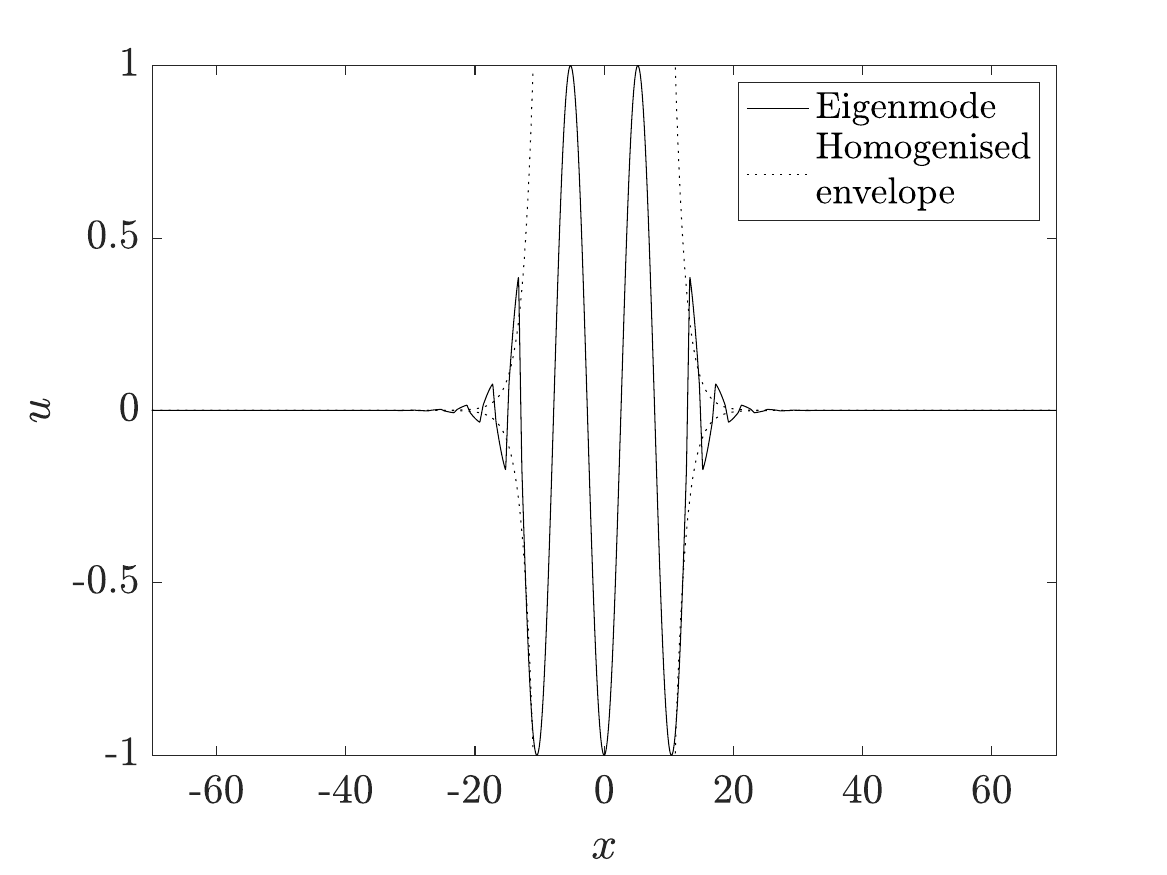}
\caption{} \label{HFH_22_disloc3}
\end{subfigure}

\vspace{-0.4cm}

\caption{The localised eigenmodes of the dislocated Su-Schrieffer-Heeger medium \eqref{eq:c_SSH_dislocated} for dislocation length $l=22$. The modes have the eigenfrequencies (a) $\omega=0.367$, (b) $\omega=0.449$ and (c) $\omega=0.577$ and are shown with the transfer matrix eigenvalue envelopes. In (d), the eigenmode with frequency $\omega=0.577$ is shown with the homogenised envelope. The parameter values $r=10$, $d=0.25$ and $a=0.1$ are used.} \label{fig:disloc22}
\end{figure}

\section{Interfacial defects in periodic media} \label{sec:half}

With some of the recent breakthroughs in topological waveguides in mind, we now explore a related setting where two different periodic media are stuck together to create an interface. We will present an approach to study the case where the two media are distinct but have the same spectral band structure. A convenient way to obtain such a material is to start with a periodic medium and make `equal but opposite' perturbations on either side of the defect. This approach was used in \cite{makwana2019tunable, makwana2018geometrically} to design tunable topological waveguides. We will achieve a similar effect by starting with a Su-Schrieffer-Heeger medium, as studied in \Cref{sec:noncompact}, with $d=1/2$ so that the first two bands stick together to close the first band gap (as shown in \Cref{fig:SSHbands}). We then perturb $d\mapsto 1/2-\epsilon$ on the left of the interface and $d\mapsto 1/2+\epsilon$ on the right of the interface, to give the medium depicted in \Cref{fig:SSH_interface_medium}, which satisfies our requirements.

\subsection{Problem setting} Suppose we have two media, labelled $A$ and $B$, which are both periodic, symmetric and which have the same associated spectra. Consider the material with a defect at $x=x_0$, which is equal to material $A$ for $x<x_0$ and equal to material $B$ for $x>x_0$. That is, if $h>0$ we have a mesh of points $\{x_n:n\in\Z\}$ which is such that
\begin{equation}
x_{n+1}-x_{n}=h \quad \text{for all } n\in\Z,
\end{equation}
and a piecewise smooth function $c:\R\to\R$ which is periodic on $\R^{>0}$ and $\R^{<0}$, in the sense that
\begin{equation} \label{periodicity_half}
c(x_n+t)=c(x_m+t) \quad \text{for any } t\in[0,h) \text{ and any }\begin{cases} n,m\in\Z^{<0}, \\ n,m\in\Z^{\geq0}. \end{cases}
\end{equation}
Additionally, $c$ is symmetric in the sense that
\begin{equation} \label{symmetry_half}
c(x_n+t)=c(x_{n+1}-t) \quad \text{for any } t\in[0,h) \text{ and any }n\in\Z.
\end{equation}
An example of the mesh  $\{x_n:n\in\Z\}$ is shown in \Cref{fig:SSH_interface_medium}, along with the exemplar medium that we will study here. This medium is a Su-Schrieffer-Heeger interface medium with $d= 1/2\pm\epsilon$ on either side of the interface, and is defined precisely as
\begin{equation} \label{eq:c_SSH_interface}
c_\epsilon(x) = \begin{cases}
\frac{1}{r} & x\in\scaleto{[\frac{2n+1-d+\epsilon-a}{2},\frac{2n+1-d+\epsilon+a}{2})\cup[\frac{2n+1+d-\epsilon-a}{2},\frac{2n+1+d-\epsilon+a}{2})}{0.38cm}, n\in\Z^{<0},\\
\frac{1}{r} & x\in\scaleto{[\frac{2n+1-d-\epsilon-a}{2},\frac{2n+1-d-\epsilon+a}{2})\cup[\frac{2n+1+d+\epsilon-a}{2},\frac{2n+1+d+\epsilon+a}{2})}{0.38cm}, n\in\Z^{\geq0},\\
1 & \text{otherwise}.
\end{cases}
\end{equation}

\begin{figure}
\centering
\includegraphics[width=0.7\linewidth]{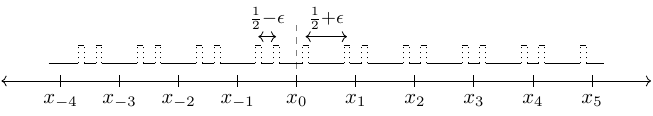}
\caption{Perturbing the Su-Schrieffer-Heeger medium on either side of an interface yields a symmetric and piecewise constant periodic medium, as defined in \eqref{eq:c_SSH_interface}, which has an semi-infinite defect and supports localised eigenmodes which can be described using this theory.} \label{fig:SSH_interface_medium}
\end{figure}

\subsection{Defect modes}
Defining the associated transfer matrices $T_A$ and $T_B$, as in \eqref{eq:transfer_left}, we see that solutions to the wave transmission problem \eqref{eq:helmholtz} with this choice of medium $c_\epsilon$ must satisfy
\begin{equation} \label{eq:transfer_half_left}
\begin{pmatrix} u(x_{n+1}) \\ u'(x_{n+1}) \end{pmatrix} = \mathbbm{1}_{\{n\geq0\}}
T_B \begin{pmatrix} u(x_n) \\ u'(x_n) \end{pmatrix} + \mathbbm{1}_{\{n<0\}}
T_A \begin{pmatrix} u(x_n) \\ u'(x_n) \end{pmatrix},
\end{equation}
for $n\in\Z$. Under our assumption of symmetry \eqref{symmetry_half}, we also have that
\begin{equation} \label{eq:transfer_half_right}
\begin{pmatrix} u(x_{n-1}) \\ u'(x_{n-1}) \end{pmatrix} = \mathbbm{1}_{\{n>0\}}
ST_BS \begin{pmatrix} u(x_n) \\ u'(x_n) \end{pmatrix} + \mathbbm{1}_{\{n\leq0\}}
ST_AS \begin{pmatrix} u(x_n) \\ u'(x_n) \end{pmatrix},
\end{equation}
for $n\in\Z$.

We can, first, prove an analogous result to \Cref{thm:localised}, determining the eigenfrequency of localised eigenmodes.

\begin{theorem} \label{thm:localised_half}
The eigenfrequency $\omega$ of a localised eigenmode of the Helmholtz problem \eqref{eq:helmholtz} posed on a medium with a semi-infinite defect must satisfy
\begin{equation*}
\begin{pmatrix} V_{21}^A(\omega) & V_{11}^A(\omega) \end{pmatrix}\begin{pmatrix} V_{11}^B(\omega) \\ V_{21}^B(\omega) \end{pmatrix} =0,
\end{equation*}
where $(V_{11}^A(\omega), V_{21}^A(\omega))^\top$ is the eigenvector of the transfer matrix $T_A(\omega)$ associated to the eigenvalue $|\lambda_1^A|<1$ and $(V_{11}^B(\omega), V_{21}^B(\omega))^\top$ is the eigenvector of $T_B(\omega)$ associated to the eigenvalue $|\lambda_1^B|<1$. Furthermore, the localised eignmode satisfies $V_{21}^Bu(x_0)=V_{11}^Bu'(x_0)$.
\end{theorem}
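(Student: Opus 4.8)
The plan is to follow the template of the proof of \Cref{thm:localised}, replacing the single background transfer matrix $T$ and the defect matrix $D^L$ by the two background transfer matrices $T_A$ and $T_B$ that govern the two half-lines. First I would record that, since $u\in C^2(\R)$, the pair $(U_0,U_0'):=(u(x_0),u'(x_0))$ is well defined and is the common datum propagated in both directions. Iterating \eqref{eq:transfer_half_left} upward from $n=0$ gives $(u(x_n),u'(x_n))^\top=T_B^{\,n}(U_0,U_0')^\top$ for $n\in\Z^{\geq0}$, while iterating \eqref{eq:transfer_half_right} downward from $n=0$ (and using $S^2=I$) gives $(u(x_n),u'(x_n))^\top=ST_A^{\,-n}S(U_0,U_0')^\top$ for $n\in\Z^{\leq0}$; exactly as in the proof of \Cref{thm:localised}, the methods of \cite{schoenberg1983properties} justify reducing the full-line eigenvalue problem to these one-sided geometric iterations.

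Next I would invoke the lemma giving the eigenvalue splitting (namely that for $\omega$ outside the spectrum the transfer matrix has real eigenvalues with $|\lambda_1|<1<|\lambda_2|$) applied to \emph{both} $T_A(\omega)$ and $T_B(\omega)$. This is legitimate precisely because $A$ and $B$ share the same spectrum, so any candidate eigenfrequency $\omega$, which must lie outside that common spectrum, produces the splitting simultaneously for $T_A$ and $T_B$; write $V^A$, $V^B$ for the respective eigenvector matrices with first columns $(V_{11}^A,V_{21}^A)^\top$ and $(V_{11}^B,V_{21}^B)^\top$. Diagonalising as in \eqref{eq:eigenval_right} and \eqref{eq:eigenval_left}: decay as $n\to+\infty$ forces the $\lambda_2^B$-component of $(U_0,U_0')^\top$ to vanish, i.e. $(U_0,U_0')^\top$ is proportional to $(V_{11}^B,V_{21}^B)^\top$, which is the stated normalisation $V_{21}^Bu(x_0)=V_{11}^Bu'(x_0)$; and decay as $n\to-\infty$ forces the $\lambda_2^A$-component of $S(U_0,U_0')^\top=(U_0,-U_0')^\top$ to vanish, i.e. $(U_0,-U_0')^\top$ is proportional to $(V_{11}^A,V_{21}^A)^\top$, equivalently $V_{21}^AU_0+V_{11}^AU_0'=0$.

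Finally I would combine the two resulting linear constraints on $(U_0,U_0')$. Existence of a nontrivial solution requires the $2\times2$ coefficient determinant to vanish; substituting $(U_0,U_0')^\top\propto(V_{11}^B,V_{21}^B)^\top$ from the first constraint into the second collapses this to $V_{21}^AV_{11}^B+V_{11}^AV_{21}^B=0$, which is exactly the claimed identity $\big(V_{21}^A\ \ V_{11}^A\big)\big(V_{11}^B\ \ V_{21}^B\big)^\top=0$. I expect the only real subtleties to be the bookkeeping with the reflection matrix $S$ on the left half-line — getting the sign on $U_0'$ right and confirming that $ST_A^{-n}S$ is the correct left propagator — together with the (routine, but to-be-stated) justification, borrowed from \cite{schoenberg1983properties} and the proof of \Cref{thm:localised}, that a genuine $C^2$ eigenmode on the whole line is captured exactly by these geometric iterations. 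A minor remark worth adding is that, as with \Cref{thm:localised}, the statement is a necessary condition, and that $\omega\in\R$ follows from the first lemma of the paper; one should also note the tacit genericity assumption that the relevant eigenvectors are not degenerate (e.g. $V_{11}^B\neq0$), under which the normalisation $V_{21}^Bu(x_0)=V_{11}^Bu'(x_0)$ is meaningful.
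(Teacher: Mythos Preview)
Your proposal is correct and follows essentially the same approach as the paper, which simply says the proof ``is similar to the proof of \Cref{thm:localised}: use transfer matrices to propagate the solution and its derivative at $x=x_0$ in either direction and force non-decaying terms to vanish.'' You have carried out exactly this programme, with the correct bookkeeping on the reflection matrix $S$ and the resulting sign on $U_0'$, and your derivation of the two linear constraints and their combination into the stated eigenvector condition is accurate.
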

\begin{proof}
This is similar to the proof of \Cref{thm:localised}: use transfer matrices to propagate the solution and its derivative at $x=x_0$ in either direction and force non-decaying terms to vanish.
\end{proof}

We can, similarly, replicate \Cref{cor:eigenval_decay} in this setting, to yield an upper bound on the decay of the localised eigenmode away from the defect.

\begin{corollary} \label{cor:eigenval_decay_half}
A localised eigenmode $u$ of \eqref{eq:helmholtz}, posed on a medium with a semi-infinite defect, and its associated eigenfrequency $\omega$ must satisfy
\begin{equation*}
u(x_n)=O\Big(|\lambda_1^A(\omega)|^{-n}\Big) \quad \text{and}\quad u'(x_n)=O\Big(|\lambda_1^A(\omega)|^{-n}\Big) \quad\text{as }n\to-\infty,
\end{equation*}
\begin{equation*}
u(x_n)=O\Big(|\lambda_1^B(\omega)|^{n}\Big) \quad \text{and}\quad u'(x_n)=O\Big(|\lambda_1^B(\omega)|^{n}\Big) \quad\text{as }n\to\infty,
\end{equation*}
where $\lambda_1^A(\omega)$ is the eigenvalue of $T_A(\omega)$ satisfying $|\lambda_1^A(\omega)|<1$ and $\lambda_1^B(\omega)$ is the eigenvalue of $T_B(\omega)$ satisfying $|\lambda_1^B(\omega)|<1$.
\end{corollary}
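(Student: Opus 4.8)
The plan is to follow the structure of the proof of \Cref{cor:eigenval_decay}, but now propagating the eigenmode away from the interface using $T_B(\omega)$ to the right of $x_0$ and $T_A(\omega)$ to the left. \Cref{thm:localised_half} supplies the starting data: for a localised eigenmode the vector $(u(x_0),u'(x_0))^\top$ is proportional to the eigenvector $v_1^B:=(V_{11}^B,V_{21}^B)^\top$ of $T_B(\omega)$ belonging to the eigenvalue $\lambda_1^B$ with $|\lambda_1^B|<1$. Note first that the eigenfrequency $\omega$ of a localised mode necessarily lies outside both $\sigma(c_A)$ and $\sigma(c_B)$ (otherwise the corresponding solution would be a non-decaying Floquet--Bloch wave), so by the earlier lemma on $\omega\in\R\setminus\sigma(c_0)$ each of $T_A(\omega)$ and $T_B(\omega)$ has exactly one eigenvalue of modulus less than one and one of modulus greater than one; this makes all the notation in the statement meaningful.

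For $n\in\Z^{>0}$ I would iterate \eqref{eq:transfer_half_left}, which gives $(u(x_n),u'(x_n))^\top=T_B(\omega)^n(u(x_0),u'(x_0))^\top$; since $(u(x_0),u'(x_0))^\top$ is a scalar multiple of $v_1^B$, the right-hand side is a scalar multiple of $(\lambda_1^B)^n v_1^B$, and reading off the two components yields $u(x_n)=O(|\lambda_1^B|^n)$ and $u'(x_n)=O(|\lambda_1^B|^n)$ as $n\to\infty$.

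For $n\in\Z^{\leq0}$ I would iterate \eqref{eq:transfer_half_right}, obtaining $(u(x_n),u'(x_n))^\top=(ST_A(\omega)S)^{-n}(u(x_0),u'(x_0))^\top$. The crucial point is that, since $S^2=I$, the matrix $ST_A S$ is diagonalised by $\{Sv_1^A,Sv_2^A\}$ with the \emph{same} eigenvalues $\lambda_1^A,\lambda_2^A$ as $T_A$, and that the eigenfrequency equation $V_{21}^A V_{11}^B+V_{11}^A V_{21}^B=0$ from \Cref{thm:localised_half} is precisely the condition that the planar vectors $v_1^B$ and $Sv_1^A=(V_{11}^A,-V_{21}^A)^\top$ are parallel (it is the vanishing of the relevant $2\times2$ determinant). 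Hence $(u(x_0),u'(x_0))^\top$ is a scalar multiple of the $\lambda_1^A$-eigenvector $Sv_1^A$ of $ST_A S$, so $(ST_A S)^{-n}(u(x_0),u'(x_0))^\top$ is a scalar multiple of $(\lambda_1^A)^{-n}Sv_1^A$; since $-n=|n|$ for $n\leq0$ and $|\lambda_1^A|<1$, this gives $u(x_n)=O(|\lambda_1^A|^{-n})$ and $u'(x_n)=O(|\lambda_1^A|^{-n})$ as $n\to-\infty$.

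The only genuinely delicate step is the last one: one must keep careful track of the conjugation by $S$ to confirm that $ST_A S$ has eigenvectors $Sv_i^A$ (not $v_i^A$) with unchanged eigenvalues, and that the sign pattern in the eigenfrequency relation of \Cref{thm:localised_half} forces $(u(x_0),u'(x_0))^\top$ to align with the \emph{decaying} eigenvector $Sv_1^A$ rather than the growing one $Sv_2^A$. Granting that, everything reduces to routine iteration of the recurrences \eqref{eq:transfer_half_left} and \eqref{eq:transfer_half_right}, exactly as in \Cref{cor:eigenval_decay}.
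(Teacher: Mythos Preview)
Your argument is correct and is exactly the intended one: the paper does not write out a proof for \Cref{cor:eigenval_decay_half}, but your proposal is precisely the adaptation of the proof of \Cref{cor:eigenval_decay} to the semi-infinite setting, using \Cref{thm:localised_half} to identify $(u(x_0),u'(x_0))^\top$ with the decaying eigenvector of $T_B$ and the interface relation to show it is simultaneously the decaying eigenvector $Sv_1^A$ of $ST_AS$. The bookkeeping you flag as ``delicate'' (conjugation by $S$ preserves eigenvalues and sends $v_i^A\mapsto Sv_i^A$; the determinant condition aligns $v_1^B$ with $Sv_1^A$) is handled correctly.
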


\subsection{Homogenisation of semi-infinite defect modes}

If the perturbation is small, we can use the method from \Cref{sec:HFH} to compute the homogenised envelope of the localised eigenmode. In particular, we introduce the matrix $P$ which is such that 
\begin{equation}
\epsilon P = T_B - T_A.
\end{equation}
We make some useful observations about $T_A(\omega)$, $T_B(\omega)$ and $P(\omega)$, for the specific choice of medium $c_\epsilon$ from \eqref{eq:c_SSH_interface}:
\begin{lemma} \label{lem:Tequal}
For any $\omega\in\R$, it holds that
\begin{equation*}
(T_A)_{1,1} = (T_A)_{2,2} \quad \text{and}\quad (T_B)_{1,1} = (T_B)_{2,2}.
\end{equation*}
\end{lemma}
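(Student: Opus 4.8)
The plan is to deduce Lemma~\ref{lem:Tequal} from two ingredients: that $T_A$ and $T_B$ both have unit determinant, and that each of them obeys the reflection identity $STS=T^{-1}$ enjoyed by the transfer matrix of a symmetric periodic medium — the same identity that was obtained in the bounded setting by combining \eqref{eq:transfer_left} and \eqref{eq:transfer_right} into $T(\omega)ST(\omega)S=I$. First I would establish $S T_A S = T_A^{-1}$: combining \eqref{eq:transfer_half_left} and \eqref{eq:transfer_half_right} at indices lying in the left half-line (where $c_\epsilon$ coincides with the periodic medium $A$) gives, for any solution $u$ of \eqref{eq:helmholtz}, both $(u(x_0),u'(x_0))^\top = T_A (u(x_{-1}),u'(x_{-1}))^\top$ and $(u(x_{-1}),u'(x_{-1}))^\top = S T_A S (u(x_0),u'(x_0))^\top$; since $(u(x_0),u'(x_0))^\top$ ranges over all of $\R^2$ as $u$ runs over the two-dimensional solution space of \eqref{eq:helmholtz}, these combine to force $T_A S T_A S = I$, i.e.\ $S T_A S = T_A^{-1}$. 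The same bookkeeping on the right half-line gives $S T_B S = T_B^{-1}$.

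Next I would record that $\det T_A = \det T_B = 1$; for the piecewise constant $c_\epsilon$ this is immediate because $T_A$ and $T_B$ are products of the blocks $T_{\mathrm c}(r,l,\omega)$ from \eqref{T_piecewise}, each of determinant $\cos^2(rl\omega)+\sin^2(rl\omega)=1$ (it is also, in general, the unit-determinant lemma proved earlier, and ultimately a consequence of \eqref{eq:helmholtz} having no first-order term, so that the Wronskian is constant). Given $\det T_A = 1$, one has $T_A^{-1} = \left(\begin{smallmatrix} (T_A)_{2,2} & -(T_A)_{1,2} \\ -(T_A)_{2,1} & (T_A)_{1,1} \end{smallmatrix}\right)$, while the left side of the reflection identity is $S T_A S = \left(\begin{smallmatrix} (T_A)_{1,1} & -(T_A)_{1,2} \\ -(T_A)_{2,1} & (T_A)_{2,2} \end{smallmatrix}\right)$. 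Equating the $(1,1)$ entries of $S T_A S = T_A^{-1}$ yields $(T_A)_{1,1} = (T_A)_{2,2}$, and the identical computation for $T_B$ yields $(T_B)_{1,1} = (T_B)_{2,2}$.

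As a cross-check — and an alternative argument that avoids invoking the solution space — one can use the explicit palindromic structure of the transfer matrices: following the factorisation used for \eqref{eq:c_SSH}, both $T_A$ and $T_B$ have the form $M_1 M_2 M_3 M_2 M_1$ with each factor $M_i = T_{\mathrm c}(r,l,\omega)$ having equal diagonal entries, i.e.\ the shape $\left(\begin{smallmatrix} a & b \\ c & a \end{smallmatrix}\right)$. Conjugation by the coordinate-swap matrix $J = \left(\begin{smallmatrix} 0 & 1 \\ 1 & 0 \end{smallmatrix}\right)$ sends every such matrix to its transpose, so for a palindromic product $T$ one obtains $JTJ = M_1^\top M_2^\top M_3^\top M_2^\top M_1^\top = (M_1 M_2 M_3 M_2 M_1)^\top = T^\top$, and comparing entries gives $(T)_{2,2} = (T)_{1,1}$.

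I do not expect a genuine obstacle. The one place that needs care is the index bookkeeping in \eqref{eq:transfer_half_left}--\eqref{eq:transfer_half_right}: one must pick indices at which the two relations overlap well enough to close the loops $T_A S T_A S = I$ and $T_B S T_B S = I$, which works because each loop sits inside a unit cell contained entirely in the periodic part of $c_\epsilon$. It is also worth remarking that $(T)_{1,1} = (T)_{2,2}$ is in fact a general feature of any symmetric periodic medium; the statement is specialised to $c_\epsilon$ only for consistency with the further observations about $P(\omega)$ that follow.
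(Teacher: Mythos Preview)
Your proposal is correct. The paper's own proof is very brief: it writes $T_A$ explicitly as the palindromic product $T_{\mathrm c}(1,\tfrac{1-d+\epsilon-a}{2},\omega)\,T_{\mathrm c}(r,a,\omega)\,T_{\mathrm c}(1,d{-}\epsilon{-}a,\omega)\,T_{\mathrm c}(r,a,\omega)\,T_{\mathrm c}(1,\tfrac{1-d+\epsilon-a}{2},\omega)$ and asserts that, since each factor has $(T_{\mathrm c})_{11}=(T_{\mathrm c})_{22}$, ``the symmetric product of these matrices retains this property.'' Your cross-check with the swap matrix $J$ is exactly the mechanism behind that assertion, so your second argument is the paper's argument made precise.

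Your primary argument, by contrast, takes a genuinely different route: rather than using the explicit factorisation, you appeal to the structural identity $STS=T^{-1}$ (valid for the transfer matrix of any symmetric cell) together with $\det T=1$, and read off $T_{11}=T_{22}$ by comparing the $(1,1)$ entries of $STS$ and the adjugate formula for $T^{-1}$. This is cleaner and, as you note, shows that the conclusion holds for \emph{any} symmetric periodic medium, not just the piecewise-constant $c_\epsilon$; the paper's argument is tied to the specific product form but has the virtue of being self-contained and not relying on the earlier transfer-matrix identities. Either way the bookkeeping issue you flag is harmless: the relations \eqref{eq:transfer_half_left}--\eqref{eq:transfer_half_right} do close the loop on each side of the interface, and in fact the reflection identity already follows directly from the palindromic factorisation via $ST_{\mathrm c}S=T_{\mathrm c}^{-1}$ for each block.
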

\begin{proof}
We have that 
\begin{equation} \label{eq:TA}
T_A=T_\text{c}(1,\tfrac{1-d+\epsilon-a}{2},\omega) T_\text{c}(r,a,\omega) T_\text{c}(1,d{-}\epsilon{-}a,\omega) T_\text{c}(r,a,\omega) T_\text{c}(1,\tfrac{1-d+\epsilon-a}{2},\omega),
\end{equation}
where $T_\text{c}$ was defined in \eqref{T_piecewise}. Since $(T_\text{c})_{11}\equiv (T_\text{c})_{22}$, it follows that the symmetric product of these matrices retains this property.
\end{proof}

\begin{lemma} \label{lem:Pzero}
For any $\omega\in\R$, if $d=\tfrac{1}{2}\pm\epsilon$ on either side of the interface, then it holds that
\begin{equation*}
P_{1,1}=P_{2,2}=0.
\end{equation*}
\end{lemma}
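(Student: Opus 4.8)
The plan is to exploit the explicit product structure of $T_A$ and $T_B$ in terms of the piecewise-constant building blocks $T_\text{c}(r,l,\omega)$, together with the fact that the two media differ only by swapping $d\mapsto 1/2-\epsilon$ on the left and $d\mapsto 1/2+\epsilon$ on the right — equivalently, both are obtained from the symmetric $d=1/2$ configuration by opposite shifts. First I would write $T_A$ as in \eqref{eq:TA} and write the analogous expression
\begin{equation*}
T_B=T_\text{c}(1,\tfrac{1-d-\epsilon-a}{2},\omega)\, T_\text{c}(r,a,\omega)\, T_\text{c}(1,d{+}\epsilon{-}a,\omega)\, T_\text{c}(r,a,\omega)\, T_\text{c}(1,\tfrac{1-d-\epsilon-a}{2},\omega).
\end{equation*}
By \Cref{lem:Tequal}, both $T_A$ and $T_B$ have equal diagonal entries, so $(T_B-T_A)_{11}=(T_B-T_A)_{22}$ automatically; hence it suffices to prove that the common diagonal entry vanishes, i.e. $(T_A)_{11}=(T_B)_{11}$.

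The key observation I would use is a symmetry/reversal identity for products of $T_\text{c}$ matrices. Note that $ST_\text{c}(r,l,\omega)S$ reverses the sign of the off-diagonal entries but leaves the (equal) diagonal entries unchanged, and that reversing the order of a product of $2\times2$ matrices of determinant one while conjugating appropriately produces the same trace. Concretely, for any product $M=M_1M_2\cdots M_k$ of such matrices, $\operatorname{tr}(M)$ equals $\operatorname{tr}$ of the reversed product $M_k\cdots M_1$. Since both $T_A$ and $T_B$ have equal diagonal entries, $(T_A)_{11}=\tfrac12\operatorname{tr}(T_A)$ and similarly for $T_B$, so it is enough to show $\operatorname{tr}(T_A)=\operatorname{tr}(T_B)$. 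Now I would specialise $d=1/2$: with this value, the left outer spacer has length $\tfrac{1/2-\epsilon-a}{2}$ in $T_A$ and $\tfrac{1/2+\epsilon-a}{2}$ in... — wait, more cleanly: the point is that $T_B$ is obtained from $T_A$ by the substitution $\epsilon\mapsto-\epsilon$ \emph{and simultaneously} reversing the order of the central block, because the inner spacer length is $d-\epsilon-a=\tfrac12-\epsilon-a$ in $T_A$ versus $\tfrac12+\epsilon-a$ in $T_B$ while the outer spacers swap correspondingly. Using the reversal-invariance of the trace together with $\operatorname{tr}$ being even under the relevant sign change (each $T_\text{c}(r,l,\omega)$ has diagonal $\cos(rl\omega)$, and the off-diagonal entries enter $\operatorname{tr}$ of the product only in pairs), one concludes $\operatorname{tr}(T_A)=\operatorname{tr}(T_B)$.

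The main obstacle is getting the bookkeeping exactly right: tracking which spacer lengths in $T_A$ map to which in $T_B$ under $d=1/2$ and $\epsilon\mapsto-\epsilon$, and verifying that the net effect on the trace of the five-fold symmetric product is trivial. I expect the cleanest route is not to expand the trace at all, but to argue structurally: at $d=1/2$ the medium on the left, $c_\epsilon|_{x<x_0}$, and the medium on the right, $c_\epsilon|_{x>x_0}$, are related by the reflection $x\mapsto -x$ about the interface (both are the same $d=1/2$ SSH crystal, perturbed by a shift of magnitude $\epsilon$ in opposite directions). Reflection conjugates the transfer matrix $T_A$ (which advances one cell to the left) into $ST_B^{-1}S$ or $ST_BS$ depending on orientation conventions; combined with $\det T_A=\det T_B=1$ and \Cref{lem:Tequal} this forces $(T_A)_{11}=(T_B)_{11}$, and then $P_{11}=\epsilon^{-1}\big((T_B)_{11}-(T_A)_{11}\big)=0$, with $P_{22}=0$ following identically. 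I would present the reflection argument as the main line and relegate the explicit $T_\text{c}$-product check to a one-line remark.
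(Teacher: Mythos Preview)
Your reduction is sound and cleaner than the paper's approach: by \Cref{lem:Tequal}, $(T_A)_{11}=\tfrac12\operatorname{tr}(T_A)$ and similarly for $B$, so it suffices to show $\operatorname{tr}(T_A)=\operatorname{tr}(T_B)$. The paper does not take this route; it instead expands $(T_A)_{11}$ explicitly into eight trigonometric terms and checks by hand that under the substitution $\epsilon\mapsto-\epsilon$ (which turns $T_A$ into $T_B$) four terms are individually invariant while the remaining four swap in pairs.

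However, your proposed structural argument has a genuine gap. The claim that reflection about the interface sends medium $A$ to medium $B$ is false: since each unit cell is already reflection-symmetric (this is hypothesis \eqref{symmetry_half}), reflecting medium $A$ about $x_0$ returns medium $A$, not medium $B$. Your trace-reversal remark is likewise empty here, because the five-factor product defining $T_A$ is palindromic. The symmetry you want is a \emph{translation}, not a reflection: at $d=1/2$, shifting medium $A$ by half a period sends the inclusion centres $n+\tfrac14+\tfrac{\epsilon}{2}$, $n+\tfrac34-\tfrac{\epsilon}{2}$ to $n+\tfrac34+\tfrac{\epsilon}{2}$, $(n{+}1)+\tfrac14-\tfrac{\epsilon}{2}$, which is exactly medium $B$. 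Hence $T_A$ and $T_B$ are monodromy matrices of the \emph{same} periodic ODE computed from base points half a period apart, so $T_B=MT_AM^{-1}$ for the half-cell transfer matrix $M$, and $\operatorname{tr}(T_A)=\operatorname{tr}(T_B)$ is immediate. With this correction your argument is more conceptual than the paper's term-by-term verification and explains structurally why the diagonal of $P$ vanishes: the two media are literally the same crystal viewed from different unit cells.
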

\begin{proof}
We will show that $(T_A)_{1,1} = (T_B)_{1,1}$. That $(T_A)_{2,2} = (T_B)_{2,2}$ follows from similar calculations. Expanding the formula \eqref{eq:TA}, gives that
\begin{equation} \label{eq:TA_full}
\begin{split}
(T_A)_{11}&=\frac{1}{2r^2}\big[
2 r^2 \cos((\tfrac{1}{2}-\epsilon-a) \omega) \cos((\tfrac{1}{2}+\epsilon-a) \omega) \cos(2r a \omega )\\
&\hspace{1.2cm}- 2r^2 \sin((\tfrac{1}{2}-\epsilon-a ) \omega) \sin((\tfrac{1}{2}+\epsilon-a) \omega)\cos(ra\omega)^2\\
&\hspace{1.2cm} + \sin((\tfrac{1}{2}-\epsilon-a) \omega) \sin((\tfrac{1}{2}+\epsilon-a) \omega) \sin(ra \omega)^2 \\
&\hspace{1.2cm}+ r^4 \sin((\tfrac{1}{2}-\epsilon-a) \omega) \sin((\tfrac{1}{2}+\epsilon-a) \omega) \sin(ra \omega)^2\\
&\hspace{1.2cm} - r \cos((\tfrac{1}{2}+\epsilon-a) \omega) \sin((\tfrac{1}{2}-\epsilon-a) \omega) \sin(2 ra \omega)\\
&\hspace{1.2cm}- r \cos((\tfrac{1}{2}-\epsilon-a ) \omega) \sin((\tfrac{1}{2}+\epsilon-a) \omega) \sin(2r a \omega)\\
&\hspace{1.2cm} - r^3 \cos((\tfrac{1}{2}+\epsilon-a) \omega) \sin((\tfrac{1}{2}-\epsilon-a) \omega) \sin(2 ra \omega) \\
&\hspace{1.2cm} - r^3 \cos((\tfrac{1}{2}-\epsilon-a ) \omega) \sin((\tfrac{1}{2}+\epsilon-a) \omega) \sin(2 ra \omega)
\big].
\end{split}
\end{equation}
To find $(T_B)_{11}$ from \eqref{eq:TA_full} we must make the substitution $\epsilon\mapsto-\epsilon$. Under this alteration, the first four terms in \eqref{eq:TA_full} are unchanged while the 5\textsuperscript{th} and 6\textsuperscript{th} terms swap values, as do the 7\textsuperscript{th} and 8\textsuperscript{th}.
\end{proof}

We now proceed as in \Cref{sec:HFH}, relabelling $v=u'$ and then adding and subtracting \eqref{eq:transfer_half_left} and \eqref{eq:transfer_half_right}. We reach the two discrete equations
\begin{equation} \label{eq:discrete_half1}
\scaleto{\begin{pmatrix} u(x_{n+1}) + u(x_{n-1}) \\ v(x_{n+1})-v(x_{n-1}) \end{pmatrix} =
2T_A \begin{pmatrix} u(x_n) \\ 0 \end{pmatrix} +\mathbbm{1}_{n>0} 2\epsilon P\begin{pmatrix} u(x_n) \\ 0 \end{pmatrix}
+\delta_{n,0} \epsilon P \begin{pmatrix} u(x_n) \\ v(x_n) \end{pmatrix}}{0.81cm},
\end{equation}
and
\begin{equation} \label{eq:discrete_half2}
\scaleto{\begin{pmatrix} u(x_{n+1}) - u(x_{n-1}) \\ v(x_{n+1})+v(x_{n-1}) \end{pmatrix} =
2T_A \begin{pmatrix} 0 \\ v(x_n) \end{pmatrix} +\mathbbm{1}_{n>0} 2\epsilon P\begin{pmatrix} 0 \\ v(x_n)  \end{pmatrix}
+\delta_{n,0} \epsilon P \begin{pmatrix} u(x_n) \\ v(x_n) \end{pmatrix}}{0.81cm}.
\end{equation}
Using the same high-frequency homogenisation ansatz as in \eqref{hfh_ansatz1} and \eqref{hfh_ansatz2}, we find that the leading-order terms are given in terms of the long-scale variable $\eta=\epsilon n$ as $u_0(\eta,0)=f(\eta)$ and $v_0(\eta,0)=g(\eta)$, where $f$ and $g$ are solutions of
\begin{equation} \label{eq:general_f_half}
0=\ddt{f}{\eta}-\T_f^2 f+\delta(\eta) (P_0)_{12} g(0)
\quad\text{and}\quad
0=\ddt{g}{\eta}-\T_f^2 g+\delta(\eta) (P_0)_{21} f(0) ,
\end{equation}
which decay as $\eta\to\pm\infty$. Here, we have defined $\T_f:=\sqrt{-2(T_2)_{11}}$ and have used \Cref{lem:Tequal} to deduce that $\T_f=\sqrt{-2(T_2)_{22}}$ also. \Cref{lem:Pzero} has been crucial to guarantee that several terms from \eqref{eq:discrete_half1} and \eqref{eq:discrete_half2} vanish. We can solve the coupled equations \eqref{eq:general_f_half} using \Cref{lem:f_solution} to deduce the following result:

\begin{theorem} \label{thm:HFH_envelope_half}
Under the homogenisation ansatz \eqref{hfh_ansatz2}, a localised eigenmode of \eqref{eq:helmholtz} posed on the medium $c_\epsilon$, which has a semi-infinite defect, must satisfy
\begin{equation*}
|u(x_n)|=\exp\left(-\tfrac{1}{2}\sqrt{\left((T_B)_{12}-(T_A)_{12}\right)\left((T_B)_{21}-(T_A)_{21}\right)}|n|\right)+O(\epsilon) \qquad \text{for } n\in\Z,
\end{equation*}
when normalised such that $\max_n|u(x_n)|=1$.
\end{theorem}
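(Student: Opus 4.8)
The plan is to solve the coupled boundary–value problem \eqref{eq:general_f_half} in closed form, read off the decay constant $\T_f$ from the resulting solvability condition, and then push the long–scale envelope back down to the discrete mesh.

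First I would apply \Cref{lem:f_solution} to each of the two equations in \eqref{eq:general_f_half} in turn, with $k=0$, $\alpha=\T_f$, and forcing given by the constants $(P_0)_{12}g(0)$ and $(P_0)_{21}f(0)$ respectively. This produces the decaying solutions
\begin{equation*}
f(\eta)=\frac{(P_0)_{12}\,g(0)}{2\T_f}\,e^{-\T_f|\eta|}, \qquad g(\eta)=\frac{(P_0)_{21}\,f(0)}{2\T_f}\,e^{-\T_f|\eta|}.
\end{equation*}
From the first formula $f\equiv 0$ forces $g(0)=0$ and hence $g\equiv 0$, and symmetrically, so for a nontrivial localised eigenmode both $f(0)$ and $g(0)$ must be nonzero. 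Evaluating the two expressions at $\eta=0$ then gives the homogeneous system
\begin{equation*}
\begin{pmatrix} 2\T_f & -(P_0)_{12} \\ -(P_0)_{21} & 2\T_f \end{pmatrix}\begin{pmatrix} f(0) \\ g(0) \end{pmatrix}=0,
\end{equation*}
which admits a nontrivial solution precisely when $4\T_f^2=(P_0)_{12}(P_0)_{21}$, i.e.\ $\T_f=\tfrac12\sqrt{(P_0)_{12}(P_0)_{21}}$, taking the positive root consistent with the definition $\T_f=\sqrt{-2(T_2)_{11}}$. Implicit in this step is the existence condition $(P_0)_{12}(P_0)_{21}>0$, the analogue of the condition $(D_0^L)_{11}-(T_0)_{11}>0$ from \Cref{sec:local}; when it fails, no localised mode is generated at leading order. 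It is \Cref{lem:Tequal} and \Cref{lem:Pzero} that were doing the real work here, collapsing \eqref{eq:discrete_half1}--\eqref{eq:discrete_half2} into the clean form \eqref{eq:general_f_half} with the diagonal contributions of $P_0$ absent and a single constant $\T_f$ on both ODEs.

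It then remains to translate back to the discrete variable. Under the ansatz \eqref{hfh_ansatz2} we have $u(x_n)=u(\epsilon n,0)+O(\epsilon)$ with $u(\epsilon n,0)=f(\epsilon n)+O(\epsilon)$ (the anti-periodic oscillation between adjacent cells already being built into \eqref{hfh_ansatz1}), and $f(\epsilon n)\propto e^{-\T_f\epsilon|n|}$. Since $\epsilon P=T_B-T_A$, we have $\epsilon(P_0)_{12}=(T_B)_{12}-(T_A)_{12}+O(\epsilon^2)$ and likewise for the $(2,1)$ entry, because $T_B-T_A$ is itself $O(\epsilon)$ and the distinction between evaluating at $\omega$ and at the band-edge frequency is of higher order. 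Hence
\begin{equation*}
\T_f\,\epsilon=\tfrac12\sqrt{\bigl(\epsilon(P_0)_{12}\bigr)\bigl(\epsilon(P_0)_{21}\bigr)}=\tfrac12\sqrt{\bigl((T_B)_{12}-(T_A)_{12}\bigr)\bigl((T_B)_{21}-(T_A)_{21}\bigr)}+O(\epsilon^2).
\end{equation*}
Taking absolute values (which removes the oscillatory sign) and normalising the free constant $f(0)$ so that the envelope peaks at $1$ at $n=0$ yields the stated formula, with error $O(\epsilon)$.

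I expect the only delicate points to be bookkeeping rather than conceptual: controlling the $O(\epsilon)$ remainder in passing from the continuous envelope equation to the discrete estimate, and justifying that a \emph{single} rate $\T_f$ governs both sides of the interface. The latter works because $T_A$ and $T_B$ both agree, to leading order, with the band-edge transfer matrix of the unperturbed Su-Schrieffer-Heeger medium, so their difference $T_B-T_A=O(\epsilon)$ enters only through the $\delta(\eta)$ coupling term at the interface and not through the exponential decay away from it — which is exactly why the envelope is governed by $|n|$ with a common coefficient. The substantive content has already been established in \Cref{lem:Tequal}, \Cref{lem:Pzero} and \Cref{lem:f_solution}.
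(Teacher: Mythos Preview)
Your proposal is correct and follows essentially the same route as the paper: apply \Cref{lem:f_solution} to the coupled system \eqref{eq:general_f_half}, extract the consistency condition $\T_f=\tfrac12\sqrt{(P_0)_{12}(P_0)_{21}}$, and convert back via $\epsilon P=T_B-T_A$. Your presentation is in fact slightly cleaner than the paper's---you write the compatibility condition as a $2\times2$ homogeneous system with vanishing determinant, whereas the paper substitutes one equation into the other---but the content is identical.
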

\begin{proof}
Using \Cref{lem:f_solution}, we can solve \eqref{eq:general_f_half} to find that 
\begin{equation}
    f(\eta) = \frac{(P_0)_{12}(P_0)_{21}g(0)}{4\T_f^2} e^{-\T_f|\eta|}
    \quad\text{and}\quad
    g(\eta) = \frac{(P_0)_{12}(P_0)_{21} f(0)}{4\T_f^2} e^{-\T_f|\eta|}.
\end{equation}
For consistency, we must have that $\T_f=\tfrac{1}{2}\sqrt{(P_0)_{12}(P_0)_{21}}$. Swapping coordinates to the discrete short-scale variable and substituting the definition of $P$ gives the result.
\end{proof}

\subsection{Example: Su-Schrieffer-Heeger interface modes}

\begin{figure}
\centering
\begin{subfigure}[b]{0.45\linewidth}
\includegraphics[width=\linewidth]{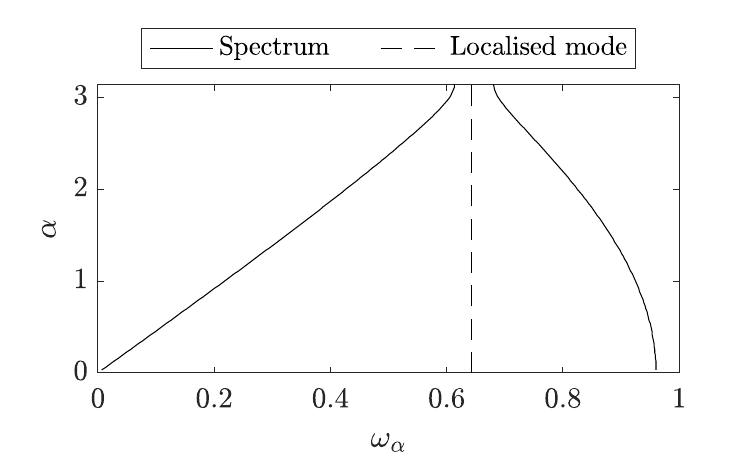}
\caption{} \label{intspec}
\end{subfigure}

\begin{subfigure}[b]{0.43\linewidth}
\includegraphics[width=\linewidth]{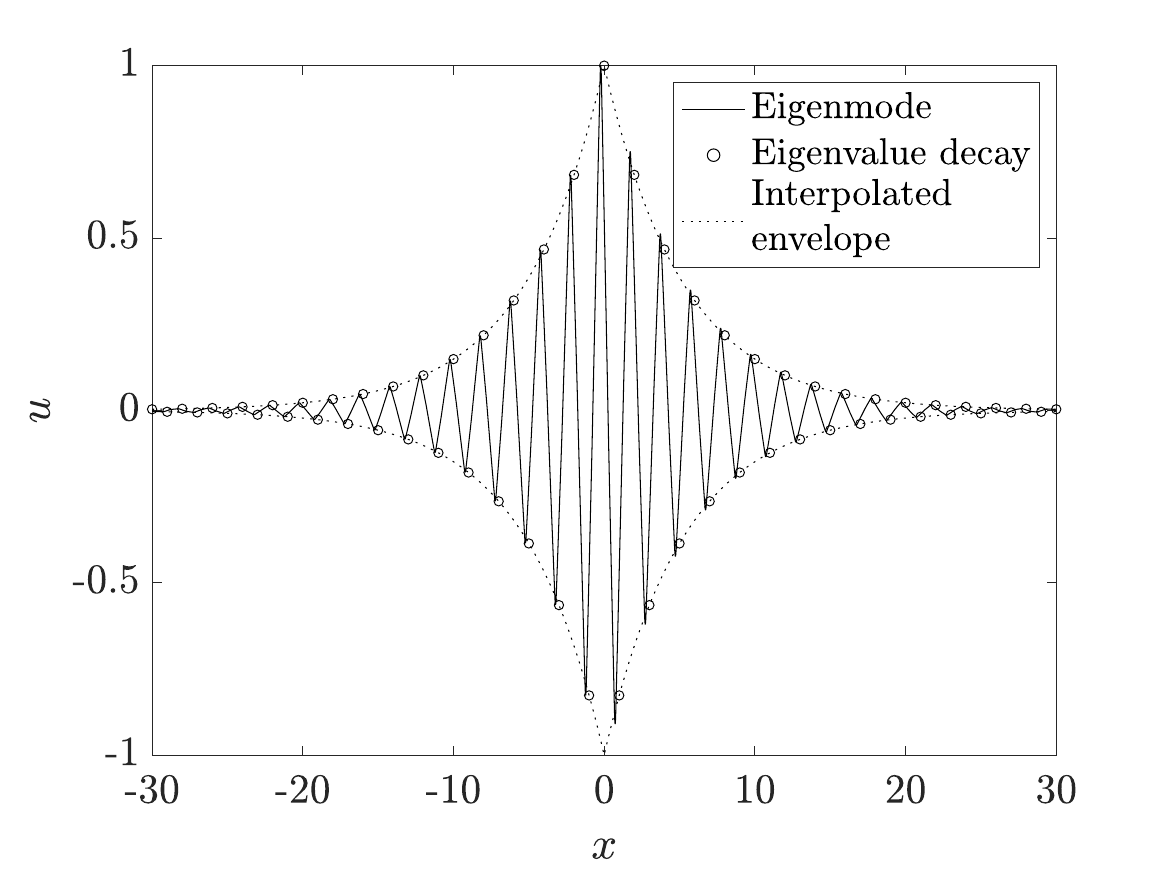}
\caption{} \label{TM_int}
\end{subfigure}
\begin{subfigure}[b]{0.43\linewidth}
\includegraphics[width=\linewidth]{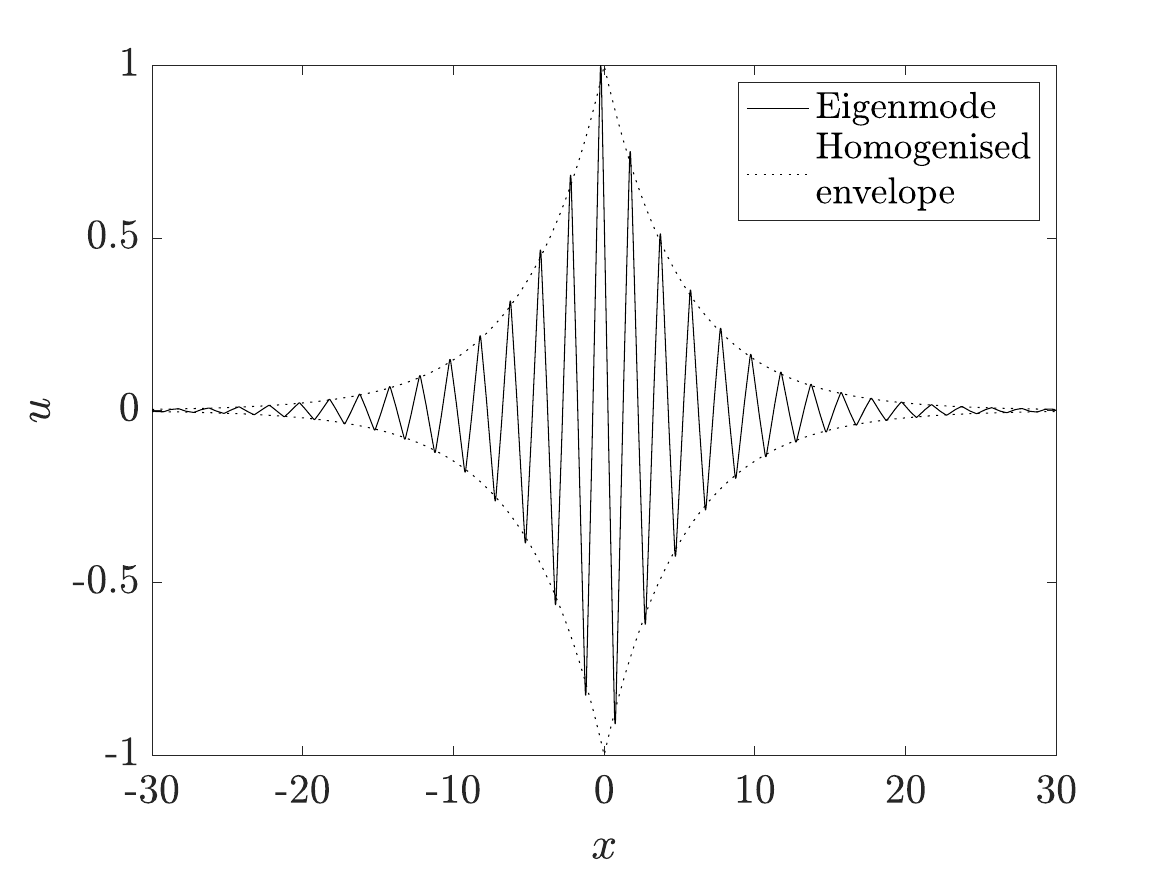}
\caption{} \label{HFH_int}
\end{subfigure}

\vspace{-0.5cm}

\caption{The localised eigenmodes of the Su-Schrieffer-Heeger interface medium \eqref{eq:c_SSH_interface} for $\epsilon=0.05$. The (coincident) spectrum of the unperturbed periodic media is shown in (a), along with the frequency of the localised eigenmode. The localised eigenmode is shown in (b) and (c) with the envelope functions computed using transfer matrix eigenvalues and homogenisation, respectively. Here, $r=10$ and $a=0.1$.} \label{fig:interfaceModes}
\end{figure}

The theory developed for semi-infinite perturbations in this section has been specifically developed for the case of the Su-Schrieffer-Heeger interface structure $c_\epsilon$ defined in \eqref{eq:c_SSH_interface}; in particular, \Cref{lem:Tequal} and \Cref{lem:Pzero} hold specifically for this medium. \Cref{thm:localised_half} can be used to find the eigenfrequency of the localised eigenmode that exists within the band gap that is opened for $\epsilon>0$. This is shown in \Cref{intspec} for the case of $\epsilon=0.05$. We can, subsequently, use \Cref{cor:eigenval_decay_half} and \Cref{thm:HFH_envelope_half} to estimate the rate at which the localised eigenmode decays away from the interface. This is shown in \Cref{TM_int,HFH_int}.

\section{Application to design problems} \label{sec:rainbow}

The value of the results developed in this work, compared to existing approaches including direct numerical simulation for example, is that they describe the crucial properties of localised eigenmodes very concisely. This means they can be used to very efficiently tune and optimise materials for applications. As a demonstrative example, we would like to design a simple rainbow filtering device that localises specific frequencies at different positions, thereby separating a wave into its corresponding frequency components. These devices have been used for many significant applications including energy harvesting \cite{chaplain2020topological}, machine hearing \cite{ammari2020mimicking, davies2021robustness} and metamaterial spectrometry \cite{tsakmakidis2007trapped}.

The periodic medium that was studied in \Cref{sec:local} has a band gap for $\omega\in(0.173,0.316)$ (the spectral band structure is shown in \Cref{PDspec}). Suppose that we wish to design a device that separates the frequencies $\omega=0.2$ and $\omega=0.3$. We can use \Cref{thm:localised} to find the perturbed material parameters that give localised eigenmodes with these eigenfrequencies. We find that $R=11.54$ gives a localised eigenmode with frequency $\omega=0.2$ and $R=6.61$ gives $\omega=0.3$ (the unperturbed periodic structure has $r=10$). We can then create a simple rainbow filtering device by introducing two defects, with $R=11.54$ and $R=6.61$. In \Cref{fig:rainbow} we show the two localised eigenmodes for this rainbow filtering device (in this case, there are ten unperturbed unit cells between the two defects). In spite of the fact that the appropriate defects were calculated using theory for just a single defect, the eigenfrequencies are not changed significantly by the interactions between the two defects in this case.

 \begin{figure}
\centering
\includegraphics[width=0.5\linewidth]{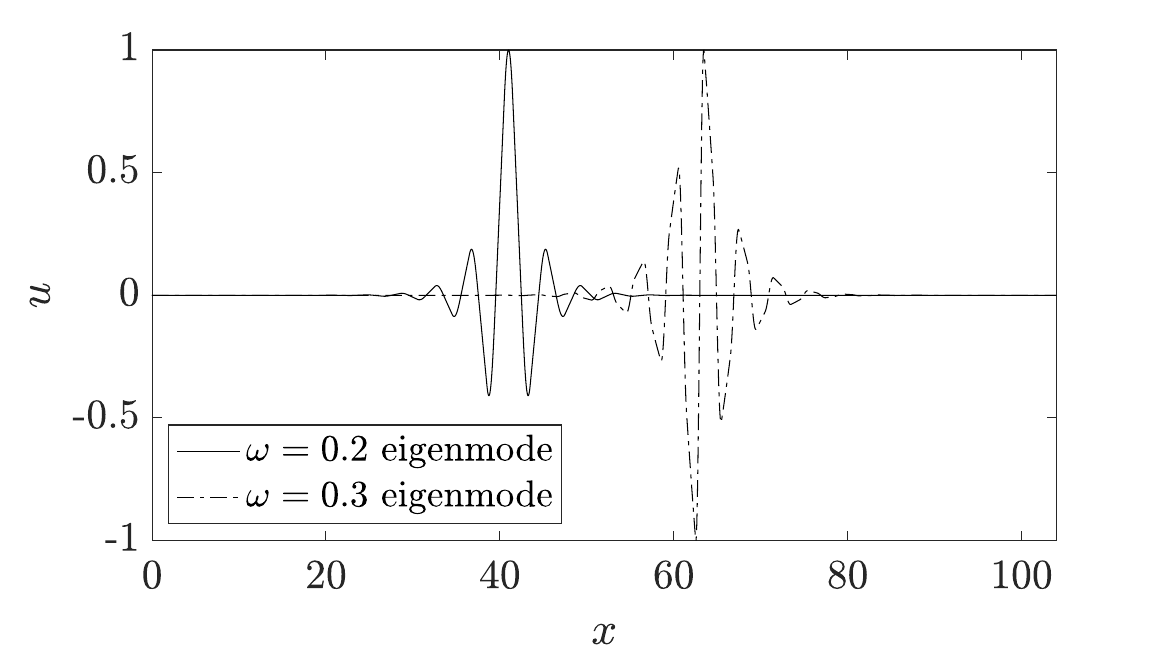}
\caption{The results of this work can be used to design a simple rainbow filtering device that has multiple defects to separate the corresponding frequencies. Here, the periodic medium has two defects and two corresponding localised eigenmodes with eigenfrequencies $\omega=0.300$ and $\omega=0.200$.} \label{fig:rainbow}
\end{figure}

The Su-Schrieffer-Heeger periodic material defined in \eqref{eq:c_SSH} could also be used to design tunable rainbow filtering devices. In this case, the localised eigenmodes would be topologically protected. In particular, we saw in \Cref{sec:noncompact} that when a dislocation is introduced to the medium, a localised eigenmode is created and its frequency crosses the band gap as the dislocation length increases. This means that for any frequency within the band gap we can find a localised eigenmode having that eigenfrequency.

\section{Conclusions}
\label{sec:conclusions}

We have developed an approach that takes advantage of the convenient theory of transfer matrices and extends previous work on high-frequency homogenisation of discrete systems with defects \cite{makwana2013localised, craster2010lattice} to continuous differential problems. This also extends previous homogenisation results for homogeneous background media \cite{marigo2017effective} and builds on work characterising the existence of topologically protected edge modes \cite{fefferman2017topologically, lin2021mathematical, gontier2020edge} by providing a means to quantify their most important properties. Given the huge importance of topologically protected states in wave physics we chose to study localised defect modes in media based on the Su-Schrieffer-Heeger model, which supports one-dimensional topologically protected eigenmodes. Since high-frequency homogenisation has been used previously to model multi-dimensional periodic media \cite{craster2010high, craster2010lattice}, this work offers an analytic approach to studying the rich variety of topological waveguides that have been developed \cite{khanikaev2013photonic, makwana2019tunable, makwana2018geometrically}. The concise formulas developed in this work both deepen understanding and facilitate efficient design optimisation, without the need for expensive simulations (as was done for the rainbow filtering device in \Cref{sec:rainbow}).

An interesting direction for future investigation would be to explore the creation of localised eigenmodes due to defects in non-periodic structures. For example, it has been shown that introducing symmetries to quasicrystalline materials can create localised eigenmodes \cite{apigo2019observation, davies2022symmetry, marti2021edge}. Some crude estimates for the decay rates of these modes were computed by the present authors in \cite{davies2022symmetry}. However, the classical homogenisation theory that has been developed for quasicrystals, \emph{e.g.} by \cite{bouchitte2010homogenization, ganesh2022bloch, wellander2018two}, is yet to be extended away from low frequencies.



\section*{Acknowledgments}
BD was funded by the H2020 FETOpen project BOHEME under grant agreement No.~863179. The code used to produce the numerical examples presented in this article is available at \href{https://doi.org/10.5281/zenodo.5960015}{https://doi.org/10.5281/zenodo.5960015}.

\bibliographystyle{siamplain}
\bibliography{references}
\end{document}